\newtheorem{thm}{Theorem}[section]
\newtheorem{prop}[thm]{Proposition}
\newtheorem{lem}[thm]{Lemma}
\newtheorem{cor}[thm]{Corollary}
\newtheorem{problem}{Problem}
\newcommand{\ch }{\mathop{\rm char}\nolimits}
\newcommand{\ord }{\mathop{\rm ord}\nolimits}
\newcommand{\Aut }{\mathop{\rm Aut}\nolimits}
\newcommand{\G }{{\mathcal G}}
\newcommand{\id	}{{\rm id}}
\newcommand{\zs}{\{ 0\} }
\newcommand{\sm}{\setminus}
\newcommand{\p}{{\mathfrak{p}}}
\newcommand{\C}{{\bf C}}
\newcommand{\Q}{{\bf Q}}
\newcommand{\Z}{{\bf Z}}
\newcommand{\ep}{{\epsilon}}
\newcommand{\x}{{\bf x}}
\newcommand{\ba}{{\bf a}}
\newcommand{\kx}{k[{\bf x}]}
\newcommand{\Rx}{R[{\bf x}]}
\newcommand{\Sx}{S[{\bf x}]}
\newcommand{\Kx}{K[{\bf x}]}
\newcommand{\kapx}{\kappa [{\bf x}]}
\begin{document}

\title{Subgroups of polynomial automorphisms 
with diagonalizable fibers}

\author{Shigeru Kuroda}

\date{}

\footnotetext{2010 {\it Mathematical Subject Classification}. 
Primary 14R10; Secondary 13A50, 14R20}

\footnotetext{Partly supported by the Grant-in-Aid for 
Young Scientists (B) 24740022, 
Japan Society for the Promotion of Science. }

\maketitle

\begin{abstract}
Let $R$ be an integral domain over a field $k$, 
and $G$ a subgroup of the automorphism group 
of the polynomial ring $R[x_1,\ldots ,x_n]$ 
over $R$. 
In this paper, 
we discuss when $G$ is diagonalizable 
under the assumption that $G$ is diagonalizable 
over the field of fractions of $R$. 
We are particularly interested in the case 
where $G$ is a finite abelian group. 
Kraft-Russell (2014) 
implies that every finite abelian subgroup of 
$\Aut _RR[x_1,x_2]$ is diagonalizable 
if $R$ is an affine PID over $k=\C $. 
One of the main results of this paper 
says that the same holds 
for a PID $R$ over any field $k$ containing 
enough roots of unity. 

\end{abstract}

\section{Introduction} 
\label{sect:intro}
\setcounter{equation}{0}

For each commutative ring $R$, 
we denote by $\Rx =R[x_1,\ldots ,x_n]$ 
the polynomial ring in $n$ variables over $R$, 
and by $\Aut _R\Rx $ the automorphism group 
of the $R$-algebra $\Rx $. 
We identify an endomorphism $\phi $ 
of the $R$-algebra $\Rx $ 
with the $n$-tuple $(\phi (x_1),\ldots ,\phi (x_n))$ 
of elements of $\Rx $, 
where the composition is defined by 
$\phi \circ \psi 
=(\phi (\psi (x_1)),\ldots ,\phi (\psi (x_n)))$. 
Note that, 
if $G$ is a subgroup of $\Aut _R\Rx $, 
and $S$ is a commutative $R$-algebra, 
then $G_S:=\{ \id _S\otimes \phi \mid \phi \in G\} $ 
is a subgroup of $\Aut _S\Sx $. 
When $S=\kappa (\mathfrak{p})$ 
is the residue field of 
the localization 
$R_{\mathfrak{p}}$ of $R$ 
at a prime ideal $\mathfrak{p}$ of $R$, 
we denote this group by $G_{\mathfrak{p}}$. 
If $R$ is a domain, 
$K$ denotes the field of fractions of $R$.

Throughout this paper, 
let $k$ be an arbitrary field. 
If $R$ is a $k$-algebra, 
then 
$D_n(k):=\{ \delta _{\ba }\mid \ba \in (k^*)^n\} $ 
is a subgroup of $\Aut _R\Rx $, 
where we define 
$\delta _{\ba }:=(a_1x_1,\ldots ,a_nx_n)$ 
for each $\ba =(a_1,\ldots ,a_n)\in (k^*)^n$. 
We say that a subgroup $G$ of $\Aut _R\Rx $ is 
{\it diagonalizable} 
if there exists $\psi \in \Aut _R\Rx $ 
such that $\psi ^{-1}\circ G\circ \psi $ 
is contained in $D_n(k)$.

Now, 
assume that $R$ is a $k$-domain. 
In this paper, 
we discuss the following problems.

\begin{problem}\label{prob:main}
Let $G$ be a subgroup of $\Aut _R\Rx $ 
such that $G_{(0)}$ is diagonalizable. 
Does it follow that $G$ is diagonalizable? 
\end{problem}

If we regard $\Aut _R\Rx $ as a subgroup of $\Aut _K\Kx $, 
then the assumption of Problem~\ref{prob:main} 
is equivalent to 
$\psi ^{-1}\circ G\circ \psi \subset D_n(k)$ 
for some $\psi \in \Aut _K\Kx $. 
When $n=2$, 
this condition implies that 
$G_{\mathfrak{p}}$ is diagonalizable 
for any prime ideal $\mathfrak{p}$ of $R$ 
by van der Kulk~\cite{Kulk} 
and Serre~\cite{tree} 
(cf.~Section~\ref{sect:action}). 
So 
we also consider the following problem 
for $n\geq 3$.

\begin{problem}\label{prob:main2}
Let $G$ be a subgroup of $\Aut _R\Rx $ 
such that $G_{\mathfrak{p}}$ is diagonalizable 
for all the prime ideals $\mathfrak{p}$ of $R$. 
Does it follow that $G$ is diagonalizable? 
\end{problem}

We are particularly interested in the case 
where $G$ is a finite abelian group. 
In fact, 
whether every finite abelian subgroup 
of $\Aut _{\C }\C [\x ]$ is conjugate 
to a subgroup of $D_n(\C )$ 
is a difficult problem with little progress 
for $n\geq 3$ 
(see~\cite{Ig} for the case $n=2$). 
This problem is a special case of 
Kambayashi's Linearization Problem~\cite{Kam}, 
and is open even for finite cyclic groups 
(cf.~\cite{KS}). 
In the case of finite cyclic groups, 
the problem is also included in the list of 
``eight challenging open problems in affine spaces" 
by Kraft~\cite{Kraft}. 
We mention that, 
over a field of positive characteristic, 
a counterexample to 
a similar problem is already given 
by Asanuma~\cite{Asanuma}. 
The situation is worse in 
the case of positive characteristic.

Under the assumptions in Problems~\ref{prob:main} 
and \ref{prob:main2}, 
there exists a subgroup $\G $ of $(k^*)^n$ 
for which 
$G_{(0)}$ is conjugate to 
$\{ \delta _{\ba }\mid \ba \in \G \} $ 
in $\Aut _K\Kx $. 
We write 
$\ba ^i:=a_1^{i_1}\cdots a_n^{i_n}$ 
for each $\ba =(a_1,\ldots ,a_n)\in \G $ 
and $i=(i_1,\ldots ,i_n)\in \Z ^n$, 
and define $M_{\G }$ to be the set of $i\in \Z ^n$ 
such that $\ba ^i=1$ for all $\ba \in \G $. 
Let $\gamma _1,\ldots ,\gamma _n$ 
be the images of 
the coordinate unit vectors of $\Z ^n$ 
in $\Gamma _{\G }:=\Z ^n/M_{\G }$. 
For each $i$, 
let $\Gamma _{\G }^{(i)}$ 
be the subgroup of $\Gamma _{\G }$ 
generated by $\gamma _j$ 
for $1\leq j\leq n$ with $j\neq i$.

The following theorem 
is the main result of this paper.

\begin{thm}\label{thm:main}
\noindent{\rm (i)} When $n=2$, 
Problem~$\ref{prob:main}$ has an affirmative answer 
in the following two cases:

\noindent {\rm (1)} $R$ is a PID. 

\noindent {\rm (2)} $R$ is a regular UFD, 
and $\Gamma _{\G }^{(1)}$ or 
$\Gamma _{\G }^{(2)}$ is not equal to $\Gamma _{\G }$.

\noindent{\rm (ii)} 
When $n\geq 3$, 
Problem~$\ref{prob:main2}$ has an affirmative answer 
if $R$ is a regular UFD, 
and at least $n-1$ of 
$\Gamma _{\G }^{(1)},\ldots ,\Gamma _{\G }^{(n)}$ 
are not equal to $\Gamma _{\G }$. 
\end{thm}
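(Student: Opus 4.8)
The plan is to reduce all three cases to a common situation and then run an induction, using the structure theory of automorphism groups of polynomial rings in two variables (van der Kulk–Serre) as the geometric input. First I would fix the subgroup $\G \subset (k^*)^n$ coming from the hypothesis, so that $G_{(0)}$ is conjugate in $\Aut _K\Kx $ to $\{\delta _\ba \mid \ba \in \G\}$, and choose $\psi \in \Aut _K\Kx $ realizing this conjugation. The goal is to show $\psi $ can be replaced by an element of $\Aut _R\Rx $ (equivalently, that after modifying $\psi $ by something in $D_n(k)$ on the right and by an $R$-automorphism on the left, the entries of $\psi $ and $\psi ^{-1}$ acquire coefficients in $R$). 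The key point throughout is that the lattice data $M_{\G }$, $\Gamma _{\G }$, and the subgroups $\Gamma _{\G }^{(i)}$ record exactly which monomials $\x ^i$ are $G_{(0)}$-semi-invariant and with which characters; an $R$-automorphism conjugating $G$ into $D_n(k)$ must send each $x_j$ to a semi-invariant, hence to an $R$-linear combination of monomials lying in a single coset of $M_{\G }$ in $\Z ^n$.

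For part (i), case (1), the plan is: since $n=2$ and $R$ is a PID, every prime $\p $ of $R$ is either $(0)$ or maximal, and by van der Kulk and Serre (cf. Section 3) the hypothesis $G_{(0)}$ diagonalizable already forces $G_\p $ diagonalizable for every $\p $; so case (1) is subsumed by a local-to-global argument of the same flavor as part (ii) specialized to $n=2$. Concretely I would argue that $\psi \in \Aut _K\Kx $, viewed through the amalgamated free product structure of $\Aut _\kappa \kappa [x_1,x_2]$ over each residue field, can be taken with coefficients integral at every height-one prime; since $R$ is a PID (so a one-dimensional regular ring whose local rings are DVRs and which is the intersection of its localizations), integrality everywhere locally gives $\psi \in \Aut _R\Rx $. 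For case (2), the extra hypothesis that $\Gamma _{\G }^{(i)}\neq \Gamma _{\G }$ for some $i$ means the monomial $x_i$ is \emph{not} forced to mix with the other variable under semi-invariance — equivalently $x_i$ spans its own character-eigenspace up to units — so one of the two coordinate functions of $\psi $ can be normalized to $u x_i$ with $u \in R^*$, reducing the problem to a one-variable triangular situation over the regular UFD $R$, where unique factorization lets one clear denominators.

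For part (ii) with $n \ge 3$: after reindexing, assume $\Gamma _{\G }^{(i)}\neq \Gamma _{\G }$ for $i = 2,\dots ,n$. The plan is to induct on $n$. The condition on $\Gamma _{\G }^{(i)}$ for $i \ge 2$ forces, for each such $i$, that the $\psi $-image of some variable is (up to an $R^*$-multiple and a permutation) a single monomial, so $\psi $ is "triangular" with only $x_1$ genuinely free; one then slices by a height-one prime of the regular UFD $R$, applies the inductive hypothesis over $\kappa (\p )$ and over $K$, and glues using that $\Rx $ is the intersection of $R_\p [\x ]$ over height-one primes (regularity + UFD $\Rightarrow$ $R = \bigcap _{\mathrm{ht}\,\p = 1} R_\p$). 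The diagonalizability hypothesis \emph{at every prime} (Problem 1.2) is what makes the gluing consistent: it rules out monodromy obstructions that could otherwise appear when $R$ is not a PID.

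I expect the main obstacle to be the gluing/descent step: showing that local conjugators (integral at each height-one prime, or existing over each residue field) can be chosen \emph{compatibly} so as to patch to a single $R$-automorphism. This is where the combinatorial hypotheses on the $\Gamma _{\G }^{(i)}$ do the real work — they cut down the conjugator to an essentially triangular form with at most one "free" variable, collapsing a potentially nonabelian patching problem into an abelian (indeed $\Z$-linear, unit-valued) one that $H^1$-type vanishing over a UFD can handle. Without the hypothesis that all but one $\Gamma _{\G }^{(i)}$ are proper, the conjugator retains genuinely two-dimensional (or higher) freedom and the descent can fail, which is presumably why the theorem is stated with exactly this restriction.
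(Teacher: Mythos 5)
Your plan has genuine gaps in all three cases, and in case (1) the overall reduction is wrong. You propose to subsume (i)(1) under ``a local-to-global argument of the same flavor as part (ii) specialized to $n=2$,'' but the machinery behind (ii) (and behind (i)(2)) requires that at least $n-1$ of the subgroups $\Gamma _{\G }^{(1)},\ldots ,\Gamma _{\G }^{(n)}$ be proper in $\Gamma _{\G }$, and in case (1) no such hypothesis is available --- this is exactly why the theorem lists (1) and (2) separately. Your fallback for (1), namely choosing $\psi $ integral at every height-one prime and gluing because a PID is the intersection of its localizations, leaves the real difficulty untouched: local conjugators are only determined up to the centrizer $C_{\G }$ of the diagonal group, which is a large nonabelian group, and the asserted ``$H^1$-type vanishing over a UFD'' is neither proved nor cited; the nonabelian patching problem is the whole content of the statement, not a formality. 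The paper's actual proof of (1) is of a different nature: after clearing denominators one runs a descent on the number of prime factors of $\det J\psi $, reduces modulo a prime $p$ dividing it, shows $\ker \bar{\psi }=h\kappa [\x ]$ for a $\G $-homogeneous coordinate $h$ of $\kappa [\x ]$ (this step uses Sathaye's theorem over a DVR, valid in any characteristic), lifts $h$ to an element of $C_{\G }(R)$ with Jacobian determinant $1$ via Proposition~\ref{thm:homo autom} (which needs the elementary factorization Lemma~\ref{lem:her}, since $k$ need not be algebraically closed), and then divides by $p$ to contradict minimality. None of these ingredients, nor any substitute for them, appears in your proposal.

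For (i)(2) and (ii), your claim that $\Gamma _{\G }^{(i)}\neq \Gamma _{\G }$ forces some entry of $\psi $ to be a single monomial up to a unit of $R$, so that $\psi $ is essentially triangular over $R$, is unjustified and stronger than what is true: in the paper the corresponding statement (Proposition~\ref{prop:specialization}) is obtained only after base change to a field $\kappa (\p )$ over which the action is diagonalizable and does not degenerate, and it says that the images of $f_1,\ldots ,f_{n-1}$ form a partial system of coordinates there. The passage from all residue fields back to $R$ is not done by induction on $n$ with slicing by height-one primes, but by the residual-variables theorem of Bhatwadekar--Dutta over a noetherian seminormal (e.g.\ UFD) ring, combined with Proposition~\ref{prop:non-degenerate} (regularity rules out degeneration) and Lemma~\ref{lem:partial system} (once $f_1,\ldots ,f_{n-1}$ form a partial system of coordinates of $\Rx $, the last coordinate can be corrected to a $V$-homogeneous one, giving diagonalizability). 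Your sketch identifies neither the degeneration issue nor any replacement for the residual-variables input, and its gluing step faces the same unaddressed centrizer-patching obstruction as in case (1). So, as written, the proposal does not constitute a proof of any of the three cases.
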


We emphasize that the base field $k$ is arbitrary 
in Theorem~\ref{thm:main}. 
When $R$ is an affine PID over $k=\C $, 
the case (1) of Theorem~\ref{thm:main} (i) 
(and hence Corollary~\ref{cor:abel} to follow) 
is contained in 
Kraft-Russell~\cite[Thm.\ 3.2]{KR}.

In Section~\ref{sect:action}, 
we derive the following corollary 
from the case (1) of Theorem~\ref{thm:main} (i) 
(see the discussion after Theorem~\ref{thm:PID}).

\begin{cor}\label{cor:abel}
Let $R$ be a PID over a field $k$, 
and $G$ a finite abelian subgroup of $\Aut _RR[x_1,x_2]$ 
with $d:=\max \{ \ord \phi \mid \phi \in G\} $. 
If $k$ contains a primitive $d$-th root of unity, 
then $G$ is diagonalizable. 
\end{cor}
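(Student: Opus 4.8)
The plan is to deduce the corollary from the case (1) of Theorem~\ref{thm:main} (i) (cf.\ Theorem~\ref{thm:PID}), which says that for a PID $R$ over $k$ a subgroup $G$ of $\Aut _RR[x_1,x_2]$ is diagonalizable whenever $G_{(0)}$ is. So, viewing $G$ inside $\Aut _KK[x_1,x_2]$, it suffices to show that the finite abelian group $G$ is conjugate there to a subgroup of $D_2(k)$, i.e.\ that $G_{(0)}$ is diagonalizable (cf.\ the remark after Problem~\ref{prob:main}). First I would record a characteristic restriction: if $\ch k=p>0$ and $p\mid d$, then $X^d-1$ is inseparable, so $k$ would contain no primitive $d$-th root of unity; hence $p\nmid d$, and since the primes dividing $|G|$ are exactly those dividing the exponent $d$ of the finite abelian group $G$, we get that $|G|$ is invertible in $k$. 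In particular every finite $G$-representation over $K$ is semisimple with eigenvalues among the $d$-th roots of unity, all lying in $k$; this is what will yield a subgroup of $D_2(k)$, not merely of $D_2(K)$.

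By the theorem of Jung--van der Kulk~\cite{Kulk} together with Serre's theory of groups acting on trees~\cite{tree} (recalled in Section~\ref{sect:action}), the finite group $G$ is conjugate in $\Aut _KK[x_1,x_2]$ into the affine subgroup $\mathrm{Aff}_2(K)$ or into the de Jonqui\`eres subgroup $J_2(K)$ of automorphisms $(\alpha x_1+\beta ,\ \gamma x_2+g(x_1))$, $\alpha ,\gamma \in K^*$, $\beta \in K$, $g\in K[x_1]$. In the affine case the linear-part map $\mathrm{Aff}_2(K)\to\mathrm{GL}_2(K)$ is injective on $G$, since a nontrivial translation has order $\infty $ or $p$ and neither divides $|G|$; thus $G$ is a finite abelian subgroup of $\mathrm{GL}_2(K)$ of order prime to $\ch k$, hence simultaneously diagonalizable over $K$ with diagonal entries in $\mu _d\subset k^*$. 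After the corresponding linear conjugation $G$ consists of maps $(\alpha _1(\phi )x_1+b_1(\phi ),\ \alpha _2(\phi )x_2+b_2(\phi ))$ with $\alpha _i(\phi )\in k^*$; since $|G|$ is invertible, $G$ has a common fixed point in $K^2$ (average over $G$), and conjugating by the translation to that point puts $G$ inside $D_2(k)$.

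In the de Jonqui\`eres case I first project $J_2(K)$ onto the affine group of the $x_1$-line and apply the one-variable version of the above (the projected group is finite abelian of order prime to $\ch k$, with multipliers in $\mu _d$ and a common fixed point on the line), after which every $\phi \in G$ has the form $(\alpha (\phi )x_1,\ \gamma (\phi )x_2+g_\phi (x_1))$ with $\alpha (\phi ),\gamma (\phi )\in \mu _d$. Writing out the composition law of $J_2(K)$ shows that $\{ g_\phi \} _{\phi \in G}$ is a $1$-cocycle for the $G$-action on $K[x_1]$ given by $(\phi ,h)\mapsto \gamma (\phi )^{-1}h(\alpha (\phi )x_1)$; since $|G|$ is invertible this cocycle is a coboundary (average over $G$), so some $r\in K[x_1]$ satisfies $g_\phi (x_1)=r(\alpha (\phi )x_1)-\gamma (\phi )r(x_1)$ for all $\phi $. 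Conjugating $G$ by the triangular automorphism $(x_1,\ x_2-r(x_1))$ then kills every $g_\phi $, so $G\subset \{ (\alpha x_1,\gamma x_2)\mid \alpha ,\gamma \in \mu _d\} \subset D_2(k)$. In both cases $G_{(0)}$ is diagonalizable, and the case (1) of Theorem~\ref{thm:main} (i) completes the proof.

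I expect the main obstacle to be the de Jonqui\`eres case: one must verify carefully that $\{ g_\phi \} $ is an honest $1$-cocycle for the indicated twisted action — keeping track of the simultaneous scaling of $x_1$ in the composition law of $J_2(K)$ — and that the averaged $r$ is genuinely polynomial. By contrast, the affine and one-variable reductions, the characteristic bookkeeping, and the final appeal to Theorem~\ref{thm:main} are routine.
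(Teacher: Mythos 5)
Your proposal is correct, but it takes a genuinely different route from the paper's. The paper deduces the corollary from Theorem~\ref{thm:PID}: writing $G=\langle \phi _1\rangle \times \cdots \times \langle \phi _r\rangle $ and choosing primitive $d_i$-th roots of unity $\zeta _i\in k$, it decomposes $R[x_1,x_2]$ into simultaneous eigenspaces $V_{\gamma }$ by averaging over $G$ (the operators $|G|^{-1}\sum \zeta _1^{-i_1l_1}\cdots \zeta _r^{-i_rl_r}(\phi _1^{l_1}\circ \cdots \circ \phi _r^{l_r})$), observes that $V=(V_{\gamma })_{\gamma }$ is an algebraic $\G $-action with $\rho ^V(\G )=G$, and then cites Theorem~\ref{thm:PID}, which is Proposition~\ref{prop:serre} combined with case (1) of Theorem~\ref{thm:main}~(i). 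You instead verify the hypothesis of Problem~\ref{prob:main} by hand: after van der Kulk--Serre places $G$ in $\mathfrak{A}_2(K)$ or $\mathfrak{J}_2(K)$, you diagonalize the linear parts (legitimate since $\ch k\nmid d$ and $\mu _d\subset k^*$), remove translations and constants by averaging to a common fixed point, and remove the Jonqui\`ere tails by a coboundary argument, landing in $D_2(k)$ rather than merely $D_2(K)$ --- which is precisely where the root-of-unity hypothesis is needed --- and then apply case (1) of Theorem~\ref{thm:main}~(i). Both proofs rest on van der Kulk--Serre and on invertibility of $|G|$ (implicit in the paper's $|G|^{-1}$ and justified exactly by your characteristic bookkeeping), but where Proposition~\ref{prop:serre} extracts homogeneous coordinates abstractly via Lemma~\ref{lem:kspan}, you compute explicit normal forms; your argument is more concrete and self-contained but is tailored to finite abelian $G$, whereas the paper's detour through algebraic $\G $-actions yields the stronger Theorem~\ref{thm:PID} for arbitrary subgroups $\G $ of $(k^*)^r$. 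One small normalization caveat: with the paper's convention $\phi \circ \psi =(\phi (\psi (x_1)),\phi (\psi (x_2)))$ one finds $g_{\phi \circ \psi }=\gamma (\psi )g_{\phi }+g_{\psi }(\alpha (\phi )x_1)$, so the honest $1$-cocycle for your twisted action is $\gamma (\phi )^{-1}g_{\phi }$ rather than $g_{\phi }$ itself; this affects only the bookkeeping, since the resulting coboundary identity $g_{\phi }=r(\alpha (\phi )x_1)-\gamma (\phi )r(x_1)$ (up to replacing $r$ by $-r$) is exactly what the conjugation by $(x_1,x_2-r(x_1))$ requires, and the averaged $r$ is indeed a polynomial in $K[x_1]$.
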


Assume that $n=2$. 
We call $f\in \Kx $ a {\it coordinate} of $\Kx $ 
if there exists $g\in \Kx $ such that $K[f,g]=\Kx $. 
If this is the case, 
for each $\phi \in \Aut _K\Kx $ 
with $\phi (f)=f$, 
there exists $h\in K[f]$ such that 
$\phi (g)=(\det J\phi )g+h$, 
where $J\phi $ denotes the Jacobian matrix of $\phi $.

We have the following corollary to 
the case (2) of Theorem~\ref{thm:main} (i).

\begin{cor}\label{cor:over A}
Let $R$ be a regular UFD over a field $k$, 
and $\phi \in \Aut _RR[x_1,x_2]$ such that 
$\det J\phi $ belongs to $k\sm \{ 1\} $. 
If there exists a coordinate $f$ of $K[x_1,x_2]$ 
with $\phi (f)=f$, 
then $\langle \phi \rangle $ is diagonalizable. 
\end{cor}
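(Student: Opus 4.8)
The plan is to reduce the statement to an application of the case (2) of Theorem~\ref{thm:main}~(i) by exhibiting a suitable diagonalization of $\langle \phi\rangle$ over $K=\mathrm{Frac}(R)$ and checking the constraint on $\Gamma_{\mathcal G}$. First I would set $a:=\det J\phi\in k\setminus\{1\}$ and analyze the action of $\phi$ on the plane $K[x_1,x_2]$ using the given coordinate $f$. Since $f$ is a coordinate, write $K[f,g]=K[x_1,x_2]$ for some $g$, and use the stated fact that $\phi(f)=f$ forces $\phi(g)=ag+h$ for some $h\in K[f]$. The first step is to straighten $g$: I want to find $g'\in g+K[f]$ with $\phi(g')=ag'$, i.e. to solve the cohomological equation $h = ac - c$ (equivalently $c = h/(a-1)$, which is legitimate precisely because $a\neq 1$) with $c\in K[f]$; setting $g':=g-c$ gives $\phi(g')=ag'$ and still $K[f,g']=K[x_1,x_2]$. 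Thus, over $K$, the automorphism $\psi:=(f,g')\in\Aut_K K[x_1,x_2]$ conjugates $\phi$ into $D_2(k)$: concretely $\psi^{-1}\circ\phi\circ\psi=\delta_{(1,a)}=(x_1,ax_2)$. Hence $G_{(0)}$ is diagonalizable, and the associated subgroup $\mathcal G\subset(k^*)^2$ is the cyclic group generated by $(1,a)$.

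Next I would compute the combinatorial invariants. For $\mathcal G=\langle(1,a)\rangle$ the lattice $M_{\mathcal G}=\{(i_1,i_2)\in\Z^2\mid 1^{i_1}a^{i_2}=1\}$ consists of all $(i_1,i_2)$ with $a^{i_2}=1$; so $M_{\mathcal G}=\Z\times r\Z$ where $r$ is the order of $a$ in $k^*$ (with $r=0$, i.e.\ the factor is $\{0\}$, if $a$ has infinite order). In either case the image $\gamma_1$ of the first unit vector in $\Gamma_{\mathcal G}=\Z^2/M_{\mathcal G}$ is zero, so $\Gamma_{\mathcal G}^{(2)}=\langle\gamma_1\rangle=0\neq\Gamma_{\mathcal G}$, since $\Gamma_{\mathcal G}\cong\Z/r\Z$ is nontrivial (as $a\neq 1$). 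Therefore the hypothesis ``$\Gamma_{\mathcal G}^{(1)}$ or $\Gamma_{\mathcal G}^{(2)}$ is not equal to $\Gamma_{\mathcal G}$'' of Theorem~\ref{thm:main}~(i)(2) is satisfied. Since $R$ is a regular UFD and $G_{(0)}$ is diagonalizable, Problem~\ref{prob:main} has an affirmative answer for $G=\langle\phi\rangle$, i.e.\ $\langle\phi\rangle$ is diagonalizable over $R$, which is the assertion.

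The one place requiring a little care — and the main obstacle — is the very first reduction: I have quietly assumed that $f$, being a coordinate of $K[x_1,x_2]$, gives rise to an honest $\psi\in\Aut_K K[x_1,x_2]$, but I must check that $\phi$ genuinely lies in $\Aut_K K[x_1,x_2]$ with $G_{(0)}=\langle\psi^{-1}\circ\phi\circ\psi\rangle\subset D_2(k)$ in the precise sense used to define $\mathcal G$. This is routine: $\phi\in\Aut_R R[x_1,x_2]$ extends canonically to $\Aut_K K[x_1,x_2]$, the displayed relation $\phi(g)=(\det J\phi)g+h$ is exactly the quoted consequence of $\phi(f)=f$, and the substitution $c=h/(a-1)$ lands in $K[f]$ precisely because $a-1$ is a nonzero element of the field $k\subset K$. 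One should also note that $g'$ depends only on $\phi$ (not on any lift), so the conjugation is by a single $\psi$, as required; and that $a\in k^*$ because $\det J\phi$ is a unit of $R[x_1,x_2]$ lying in $k$. With these checks in place the corollary follows directly from Theorem~\ref{thm:main}~(i)(2).
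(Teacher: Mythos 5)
Your proposal is correct and is essentially the paper's own argument: the paper likewise sets $u:=\det J\phi$, replaces $g$ by $g+(u-1)^{-1}h$ so that $\psi:=(f,g+(u-1)^{-1}h)$ gives $\psi^{-1}\circ\phi\circ\psi=(x_1,ux_2)$, observes $\gamma_1=0$ hence $\Gamma_{\G}^{(2)}\neq\Gamma_{\G}$, and invokes case (2) of Theorem~\ref{thm:main}~(i). The only blemish is a sign slip: with $c=(a-1)^{-1}h$ solving $h=ac-c$, the correct replacement is $g':=g+c$ (not $g-c$), since $\phi(g+c)=ag+h+c=a(g+c)$ precisely when $h=(a-1)c$.
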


Here, 
$\langle \phi \rangle $ denotes the subgroup of $\Aut _R\Rx $ 
generated by $\phi $. 
In fact, 
setting 
$u:=\det J\phi $ and 
$\psi :=(f,g+(u-1)^{-1}h)\in \Aut _K\Kx $, 
we have 
$\psi ^{-1}\circ \phi \circ \psi =(x_1,ux_2)$. 
Hence, 
$\psi ^{-1}\circ 
\langle \phi \rangle \circ \psi 
=\{ \delta _{\ba }\mid \ba \in \G \} $ 
holds for 
$\G :=\{ (1,u^i)\mid i\in \Z \} $. 
Thus, 
we get $\gamma _1=0$, 
and therefore 
$\Gamma _{\G }^{(2)}\neq \Gamma _{\G }$.

The structure of this paper is as follows. 
In Section~\ref{sect:action}, 
we recall the notion of algebraic actions of 
subgroups of $(k^*)^r$ on $\Rx $, 
and prove some preliminary results. 
We also derive a consequence of Theorem~\ref{thm:main}. 
In Section~\ref{sect:DC}, 
we study centrizer of subgroups of $D_n(k)$ 
in $\Aut _R\Rx $. 
Section~\ref{sect:Proof} is devoted to proving 
the case (1) of Theorem~\ref{thm:main} (i). 
In this proof, 
the main result of Section~\ref{sect:DC} 
is crucial when $k$ is not algebraically closed. 
The rest of Theorem~\ref{thm:main} 
is proved in Section~\ref{sect:Gauss} 
using a different technique.

\section{Algebraic $\G $-actions}\label{sect:action}
\setcounter{equation}{0}

Let $\G $ be a subgroup of $(k^*)^r$, 
where $r\geq 1$. 
As in Section~\ref{sect:intro}, 
we define $M_{\G }$ to be the set of $i\in \Z ^r$ 
such that $\ba ^i=1$ for all $\ba \in \G $. 
Then, 
for each $\ba \in \G $, 
the map $\Z ^r\ni i\mapsto \ba ^i\in k^*$ 
factors through $\Gamma _{\G }:=\Z ^r/M_{\G }$. 
We denote by $\ba ^{\gamma }$ 
the image of $\gamma \in \Gamma _{\G }$ 
under the induced map $\Gamma _{\G }\to k^*$. 
Let $R$ be a $k$-algebra. 
An {\it algebraic $\G $-action} on $\Rx $ 
is by definition a collection 
$V=(V_{\gamma })_{\gamma \in \Gamma _{\G }}$ 
of $R$-submodules of $\Rx $ such that 
$\Rx =\bigoplus _{\gamma \in \Gamma _{\G }}V_{\gamma }$, 
and $V_{\lambda }V_{\mu }\subset V_{\lambda +\mu }$ 
for each $\lambda ,\mu \in \Gamma _{\G }$. 
For each $\ba \in \G $, 
we define an automorphism 
$\phi _{\ba }^V:\Rx \to \Rx $ 
by $\phi _{\ba }^V(f)=\ba ^{\gamma }f$ 
for $f\in V_{\gamma }$ and $\gamma \in \Gamma _{\G }$. 
Then, the map 
$$
\rho ^V:\G \ni \ba \mapsto \phi _{\ba }^V\in \Aut _R\Rx 
$$ 
is a homomorphism of groups. 
We say that $f\in \Rx $ is $V$-{\it homogeneous} 
if $f$ belongs to $V_{\gamma }$ 
for some $\gamma \in \Gamma _{\G }$. 
Note that, 
for each $\gamma \in \Gamma _{\G }\sm \zs $, 
there exists $\ba \in \G $ 
such that $\ba ^{\gamma }\neq 1$. 
Hence, 
$f\in \Rx $ is $V$-homogeneous 
if and only if $\phi (f)\in kf$ 
for all $\phi \in \rho ^V(\G )$. 
We say that $V$ is {\it diagonalizable} 
if the subgroup $\rho ^V(\G )$ of $\Aut _R\Rx $ 
is diagonalizable, 
or equivalently there exists 
$\psi \in \Aut _R\Rx $ such that 
$\psi (x_1),\ldots ,\psi (x_n)$ are $V$-homogeneous. 
We remark that 
$V_{\gamma }\cap R=\zs $ holds 
for any $\gamma \neq 0$. 
If $S$ is an $R$-algebra, 
then 
$V_S:=(S\otimes _RV_{\gamma })_{\gamma \in \Gamma _{\G }}$ 
is an algebraic $\G $-action on $\Sx $ 
with $\rho ^{V_S}(\G )=\rho ^V(\G )_S$.

The following lemma holds for any 
algebraic $\G $-action on $\Rx $.

\begin{lem}\label{lem:kspan}
Let 
$f=\sum _{\gamma \in \Gamma _{\G }}f_{\gamma }\in \Rx $, 
where $f_{\gamma }\in V_{\gamma }$. 
Then, 
for each $\gamma \in \Gamma _{\G }$, 
we may write $f_{\gamma }$ as a $k$-linear combination 
of $\phi (f)$ for $\phi \in \rho ^V(\G )$. 
\end{lem}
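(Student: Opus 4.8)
The plan is to extract each homogeneous component $f_\gamma$ from $f$ by applying a suitable $k$-linear "averaging" operator built from the automorphisms $\phi_{\ba}^V$. Concretely, observe that $\rho^V(\G)$ acts on the finite set of components present in $f$: if $S\subset\Gamma_{\G}$ is the (finite) set of $\gamma$ with $f_\gamma\neq 0$, then $\phi_{\ba}^V(f)=\sum_{\gamma\in S}\ba^{\gamma}f_\gamma$ for every $\ba\in\G$. So the question is purely one about the span: I want to show that each $f_\gamma$ lies in the $k$-span of $\{(\ba^{\gamma'})_{\gamma'\in S}\cdot \text{(vector }(f_{\gamma'}))\mid \ba\in\G\}$, i.e. that the "projection to the $\gamma$-coordinate" functional is in the $k$-span of the evaluation functionals $\gamma'\mapsto \ba^{\gamma'}$ restricted to $S$.

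The first step is to reduce to a statement about characters on the finitely generated abelian group $\Gamma_{\G}$. The maps $\gamma\mapsto\ba^{\gamma}$ for $\ba\in\G$ are exactly a family of characters $\Gamma_{\G}\to k^*$, and by construction of $M_{\G}$ they separate points of $\Gamma_{\G}$: for distinct $\lambda,\mu$ there is $\ba\in\G$ with $\ba^{\lambda-\mu}\neq 1$, i.e. $\ba^\lambda\neq\ba^\mu$ (this is exactly the fact already noted in the excerpt, that for $\gamma\neq 0$ some $\ba$ has $\ba^\gamma\neq 1$). The key linear-algebra fact is then: if $\chi_1,\dots,\chi_m$ are characters of an abelian group taking values in a field $k$, and $S$ is a finite subset on which the $\chi_j$ already separate points, then the restrictions $\chi_j|_S$ span the full function space $k^S$. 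This follows from Artin's lemma on linear independence of distinct characters: enlarging $\{\chi_j\}$ to a set of characters that still only takes finitely many distinct values on $S$ but separates $S$, the induced map $\Z^m$ (or the relevant finite quotient) gives $|S|$ pairwise distinct characters of $S$ as a group, which are $k$-linearly independent, hence span $k^S$.

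Given that spanning statement, the proof concludes quickly: the coordinate functional "return the $\gamma$-entry" on $k^S$ is a $k$-linear combination $\sum_\ba c_\ba\,(\text{evaluation at }\ba)$ for suitable $c_\ba\in k$ (finitely many nonzero), and applying the same combination to the vector $(f_{\gamma'})_{\gamma'\in S}$ componentwise—i.e. forming $\sum_\ba c_\ba\,\phi_{\ba}^V(f)=\sum_\ba c_\ba\sum_{\gamma'\in S}\ba^{\gamma'}f_{\gamma'}$—yields $f_\gamma$. Since each $\phi_{\ba}^V\in\rho^V(\G)$, this exhibits $f_\gamma$ as a $k$-linear combination of $\{\phi(f)\mid\phi\in\rho^V(\G)\}$, as desired.

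The main obstacle is the spanning lemma over an arbitrary field $k$: one cannot invoke orthogonality of characters or Vandermonde-type invertibility naively, because $k$ need not contain the relevant roots of unity and the character values $\ba^\gamma$ need not be distinct for distinct $\ba$ (only for distinct $\gamma$). The right framework is to factor through the finite quotient of $\Gamma_{\G}$ that acts faithfully on $S$ and use Dedekind's/Artin's linear independence of the distinct characters this quotient admits; I expect this to be the only non-formal point, and it is genuinely characteristic-free.
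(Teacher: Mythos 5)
Your reduction is fine as far as it goes: writing $S$ for the finite support of $f$, it suffices to show that the vectors $(\ba ^{\gamma '})_{\gamma '\in S}$, $\ba \in \G $, span $k^S$, and that statement is in fact true. The gap is in your justification of it, which you yourself flag as the only non-formal point. First, $S$ is merely a finite subset of $\Gamma _{\G }$, not a subgroup, so ``pairwise distinct characters of $S$ as a group'' has no meaning; what you presumably intend is the subgroup generated by $S$ modulo the common kernel of the characters coming from $\G $. But that quotient need not be finite and the characters need not factor through any finite quotient: already for $n=1$ and $\G =k^*$ with $k$ infinite one has $\Gamma _{\G }\cong \Z $ and characters $i\mapsto a^i$ with $a$ of infinite order, so ``the finite quotient of $\Gamma _{\G }$ that acts faithfully on $S$'' does not exist. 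Second, and more fundamentally, Dedekind--Artin independence of distinct characters is a statement about functions on the whole group; it does not imply that restrictions to a finite subset $S$ stay independent, let alone span $k^S$. For instance, for $H=\Z /2\times \Z /2$ and $S=\{ (0,0),(1,0)\} $, the trivial character and the character $(i,j)\mapsto (-1)^j$ are distinct on $H$ but have identical restrictions to $S$. So your step ``$|S|$ pairwise distinct characters, which are $k$-linearly independent, hence span $k^S$'' is exactly the point that needs proof, and the argument sketched does not supply it.

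The spanning lemma can be repaired: the $k$-span $A$ of the functions $\gamma '\mapsto \ba ^{\gamma '}$ on $S$ is a unital subalgebra of $k^S$ (the set of these functions is closed under multiplication because $\G $ is a group), it separates the points of $S$, hence the $|S|$ evaluation maps $A\to k$ are pairwise distinct $k$-algebra homomorphisms; these are linearly independent, so $\dim _kA\geq |S|$ and $A=k^S$. The paper avoids the global spanning statement altogether: it argues by induction on the number of nonzero homogeneous components of $f$, choosing a single $\ba \in \G $ that separates just two of them, grouping the $f_{\gamma }$ according to the value $\ba ^{\gamma }$, and recovering these groups from $(\phi _{\ba }^V)^j(f)$, $0\leq j<s$, by a Vandermonde argument with the distinct values $\alpha _1,\ldots ,\alpha _s\in k^*$; each group has strictly fewer components, so induction finishes the proof. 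That route uses no roots of unity, no finite quotients and no character theory beyond the separation property you also invoke. Either repair your key lemma along the algebra lines above or switch to such an induction; as written, the proposal has a genuine gap at its central step.
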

\begin{proof}
We prove the lemma by induction on 
$l:=\# \{ \gamma \mid f_{\gamma }\neq 0\} $. 
The assertion is clear if $l\leq 1$. 
Assume that $l\geq 2$. 
Take 
$\lambda ,\mu \in 
\{ \gamma \mid f_{\gamma }\neq 0\}$ 
with $\lambda \neq \mu $, 
and 
$\ba \in \G $ with 
$\ba ^{\lambda }\neq \ba ^{\mu }$. 
For each $\alpha \in k^*$, 
we define $g_{\alpha }$ to be the sum of 
$f_{\gamma }$ for $\gamma \in \Gamma _{\G }$ 
with $\ba ^{\gamma }=\alpha $. 
Then, 
we have $f=\sum _{\alpha \in k^*}g_{\alpha }$ 
and $\phi (g_{\alpha })=\alpha g_{\alpha }$ 
for each $\alpha \in k^*$, 
where $\phi :=\phi _{\ba }^V$. 
Let $\alpha _1,\ldots ,\alpha _s$ 
be distinct elements of $k^*$ 
such that $f=\sum _{i=1}^sg_{\alpha _i}$. 
Then, 
by linear algebra, 
each $g_{\alpha _i}$ 
is written as a $k$-linear combination of 
$\phi ^j(f)
=\sum _{i=1}^s\alpha _i^jg_{\alpha _i}$ 
for $0\leq j<s$. 
Since $g_{\alpha }\neq 0$ 
for $\alpha =\ba ^{\lambda },\ba ^{\mu }$, 
the number of nonzero $V$-homogeneous 
components of $g_{\alpha }$ 
is less than $l$ for each $\alpha $. 
Hence, 
the lemma follows by induction assumption. 
\end{proof}

Now, 
assume that $n=2$. 
Recall the following fact 
which is a consequence of 
van der Kulk~\cite{Kulk} and Serre~\cite{tree} 
(see also \cite[Prop.\ 1.11]{Wright}): 
Let $G$ be a subgroup of $\Aut _k\kx $ 
such that 
$\deg G:=\{ \deg \phi (x_i)\mid \phi \in G,\ i=1,2\} $ 
is bounded above. 
Here, 
$\deg f$ denotes the total degree of $f$ 
for a polynomial $f$. 
Then, 
$G$ is conjugate to a subgroup of 
the {\it affine subgroup} 
$$
\mathfrak{A}_2(k):=\{ 
\phi \in \Aut _k\kx 
\mid \deg \phi (x_1)=\deg \phi (x_2)=1\} 
$$ 
or the {\it Jonqui\`ere subgroup} 
$$
\mathfrak{J}_2(k)
:=\{ (ax_1+c,bx_2+f(x_1))\mid 
a,b\in k^*,\ c\in k,\ f(x_1)\in k[x_1]\} . 
$$

The following proposition 
is a consequence of this fact.

\begin{prop}\label{prop:serre}
If $k'$ is an extension field of $k$, 
then every algebraic $\G $-action on $k'[x_1,x_2]$ 
is diagonalizable. 
\end{prop}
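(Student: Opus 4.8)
The plan is to invoke the structure theorem of van der Kulk and Serre recalled above to reduce to the cases in which $\rho ^V(\G )$ is contained in $\mathfrak{A}_2(k')$ or in $\mathfrak{J}_2(k')$, and then, in each case, to exhibit $V$-homogeneous coordinates explicitly with the help of Lemma~\ref{lem:kspan}. Write $G:=\rho ^V(\G )\subset \Aut _{k'}k'[x_1,x_2]$, and for $i=1,2$ write $x_i=\sum _{\gamma }(x_i)_{\gamma }$ with $(x_i)_{\gamma }\in V_{\gamma }$, only finitely many of which are nonzero. Since $\phi _{\ba }^V(x_i)=\sum _{\gamma }\ba ^{\gamma }(x_i)_{\gamma }$ for every $\ba \in \G $, we have $\deg \phi _{\ba }^V(x_i)\leq \max _{\gamma }\deg (x_i)_{\gamma }$ independently of $\ba $, so $\deg G$ is bounded above; hence, by the fact recalled above, $G$ is conjugate in $\Aut _{k'}k'[x_1,x_2]$ to a subgroup of $\mathfrak{A}_2(k')$ or of $\mathfrak{J}_2(k')$. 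If $\sigma \in \Aut _{k'}k'[x_1,x_2]$ realizes such a conjugation, then $V^{\sigma }:=(\sigma ^{-1}(V_{\gamma }))_{\gamma }$ is again an algebraic $\G $-action on $k'[x_1,x_2]$ with $\rho ^{V^{\sigma }}(\G )=\sigma ^{-1}\circ G\circ \sigma $, and $V$ is diagonalizable if and only if $V^{\sigma }$ is; so I may assume from the outset that $G\subset \mathfrak{A}_2(k')$ or $G\subset \mathfrak{J}_2(k')$. I would also use that $k'\subset V_0$ (as $V_{\gamma }\cap k'=\zs $ for $\gamma \neq 0$), so that constants are $V$-homogeneous of weight $0$; and that, since every eigenvalue $\ba ^{\gamma }$ lies in $k^*$, it suffices to produce $\psi \in \Aut _{k'}k'[x_1,x_2]$ for which $\psi (x_1)$ and $\psi (x_2)$ are $V$-homogeneous.

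Suppose first $G\subset \mathfrak{A}_2(k')$. Then $\phi (x_i)\in W:=k'+k'x_1+k'x_2$ for every $\phi \in G$, so by Lemma~\ref{lem:kspan} each $(x_i)_{\gamma }$ lies in $W$ as well; consequently $W=\bigoplus _{\gamma }(W\cap V_{\gamma })$, and, since $k'\subset W\cap V_0$, the quotient $W/k'$ is a two-dimensional $\Gamma _{\G }$-graded $k'$-vector space on which $\phi _{\ba }^V$ acts on the $\gamma $-part by $\ba ^{\gamma }$. Choosing a $k'$-basis of $W/k'$ made of homogeneous elements and lifting it to homogeneous $v_1,v_2\in W$, the set $\{1,v_1,v_2\}$ spans $W\ni x_1,x_2$, so $\psi :=(v_1,v_2)$ is an automorphism of $k'[x_1,x_2]$ whose coordinates are $V$-homogeneous; hence $V$ is diagonalizable.

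Suppose next $G\subset \mathfrak{J}_2(k')$. Applying Lemma~\ref{lem:kspan} to $f=x_1$ and using $\phi (x_1)\in k'+k'x_1$ for $\phi \in G$, we obtain $(x_1)_{\gamma }=\mu _{\gamma }x_1+\nu _{\gamma }$ with $\mu _{\gamma },\nu _{\gamma }\in k'$; comparing the coefficient of $x_1$ in $x_1=\sum _{\gamma }(x_1)_{\gamma }$ gives $\sum _{\gamma }\mu _{\gamma }=1$, so some $\gamma _1$ has $\mu _{\gamma _1}\neq 0$, and then $y_1:=\mu _{\gamma _1}^{-1}(x_1)_{\gamma _1}=x_1+\mu _{\gamma _1}^{-1}\nu _{\gamma _1}$ is $V$-homogeneous. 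Applying Lemma~\ref{lem:kspan} to $f=x_2$ and using $\phi (x_2)\in k'x_2+k'[x_1]$, we likewise obtain $(x_2)_{\gamma }=\lambda _{\gamma }x_2+g_{\gamma }$ with $\lambda _{\gamma }\in k'$ and $g_{\gamma }\in k'[x_1]$, and $\sum _{\gamma }\lambda _{\gamma }=1$; fixing $\gamma _2$ with $\lambda _{\gamma _2}\neq 0$, the element $y_2:=\lambda _{\gamma _2}^{-1}(x_2)_{\gamma _2}=x_2+\lambda _{\gamma _2}^{-1}g_{\gamma _2}$ is $V$-homogeneous. Then $\psi :=(y_1,y_2)$ is an automorphism of $k'[x_1,x_2]$ with $V$-homogeneous coordinates, so $V$ is diagonalizable.

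The step I expect to need the most care is the reduction in the first paragraph: the grading $V$ is unrelated to the degree filtration of $k'[x_1,x_2]$, so a priori the homogeneous components $(x_i)_{\gamma }$ may have arbitrarily large degree, and none of the linear algebra of the last two paragraphs would be available. It is precisely the van der Kulk--Serre theorem — applicable thanks to the easy boundedness of $\deg G$ — that forces $G$ into a subgroup in which the shape of each $\phi (x_i)$, and hence, through Lemma~\ref{lem:kspan}, of each $(x_i)_{\gamma }$, is rigid enough to let one read off homogeneous coordinates. The remaining points, that the conjugating automorphism $\sigma $ can be chosen in $\Aut _{k'}k'[x_1,x_2]$ and that passing from $V$ to $V^{\sigma }$ preserves diagonalizability in both directions, are routine.
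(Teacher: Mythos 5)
Your proof is correct and follows essentially the same route as the paper: bound $\deg \rho ^V(\G )$ by the degrees of the homogeneous components of $x_1,x_2$, invoke van der Kulk--Serre to land in $\mathfrak{A}_2(k')$ or $\mathfrak{J}_2(k')$, and then use Lemma~\ref{lem:kspan} to extract $V$-homogeneous coordinates in each case. The only (harmless) difference is presentational: you conjugate the action $V$ itself by $\sigma $ and work with the components of $x_1,x_2$, whereas the paper keeps $V$ fixed and analyzes the components $f_{i,\gamma }$ of the conjugating automorphism's coordinates $f_1,f_2$; your write-up also makes explicit the linear-algebra step the paper leaves as ``follows similarly.''
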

\begin{proof}
For $i=1,2$, 
we write 
$x_i=\sum _{\gamma \in \Gamma _{\G }}x_{i,\gamma }$, 
where $x_{i,\gamma }\in V_{\gamma }$. 
Then, 
we have $\phi _{\ba }^V(x_i)
=\sum _{\gamma \in \Gamma _{\G }}\ba ^{\gamma }x_{i,\gamma }$ 
for each $\ba \in \G $. 
Hence, 
$\deg \rho ^V(\G )$ is bounded above by 
$\max \{ \deg x_{i,\gamma }\mid 
\gamma \in \Gamma _{\G },i=1,2\} $. 
By the fact above, 
there exists $\psi =(f_1,f_2)\in \Aut _k'k'[\x ]$ 
such that $G':=\psi ^{-1}\circ \rho ^V(\G )\circ \psi $ 
is contained in $\mathfrak{A}_2(k')$ or $\mathfrak{J}_2(k')$. 
Write $f_i=\sum _{\gamma \in \Gamma _{\G }}f_{i,\gamma }$ 
for $i=1,2$, 
where $f_{i,\gamma }\in V_{\gamma }$. 
When $G'\subset \mathfrak{A}_2(k')$, 
we have $\phi (f_i)\in k'f_1+k'f_2+k'$ 
for each $\phi \in \rho ^V(\G )$ and $i=1,2$. 
Hence, 
$f_{i,\gamma }$ belongs to 
$k'f_1+k'f_2+k'$ for each $i$ and $\gamma $ 
by Lemma~\ref{lem:kspan}. 
This implies that 
$k'[f_{1,\lambda },f_{2,\mu }]=k'[f_1,f_2]$ 
for some $\lambda ,\mu \in \Gamma _{\G }$. 
Since $(f_{1,\lambda },f_{2,\mu })\in \Aut _k'k'[\x ]$, 
and $f_{1,\lambda }$ and $f_{2,\mu }$ 
are $V$-homogeneous, 
we conclude that $V$ is diagonalizable. 
When $G'\subset \mathfrak{J}_2(k')$, 
we have $f_{1,\gamma }\in k'f_1+k'$ 
and $f_{2,\gamma }\in k'f_2+k'[f_1]$ 
for each $\gamma \in \Gamma _{\G }$ 
by Lemma~\ref{lem:kspan}. 
From this, 
the assertion follows similarly. 
\end{proof}

The following theorem is a consequence of 
the case (1) of Theorem~\ref{thm:main} (i), 
since 
$\rho ^V(\G )_{(0)}=\rho ^{V_K}(\G )$ 
is diagonalizable by Proposition~\ref{prop:serre}.

\begin{thm}\label{thm:PID}
Let $R$ be a PID over a field $k$, 
and $\G $ a subgroup of $(k^*)^r$ for some $r\geq 1$. 
Then, 
every algebraic $\G $-action on $R[x_1,x_2]$ 
is diagonalizable. 
\end{thm}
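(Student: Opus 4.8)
The plan is to reduce Theorem~\ref{thm:PID} to the case (1) of Theorem~\ref{thm:main}~(i), which is exactly the statement that Problem~\ref{prob:main} has an affirmative answer for $n=2$ when $R$ is a PID. Concretely, given an algebraic $\G$-action $V=(V_\gamma)_{\gamma\in\Gamma_\G}$ on $R[x_1,x_2]$, I would set $G:=\rho^V(\G)$, a subgroup of $\Aut_R R[x_1,x_2]$. By the remark just before the theorem together with the compatibility $\rho^V(\G)_S=\rho^{V_S}(\G)$ for any $R$-algebra $S$, taking $S=K$ gives $G_{(0)}=\rho^V(\G)_{(0)}=\rho^{V_K}(\G)$, which is the automorphism group attached to the algebraic $\G$-action $V_K$ on $K[x_1,x_2]$. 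Proposition~\ref{prop:serre}, applied with $k'=K$, says that $V_K$ is diagonalizable, i.e. $\rho^{V_K}(\G)$ is conjugate in $\Aut_K K[x_1,x_2]$ to a subgroup of $D_2(k)$; hence $G_{(0)}$ is diagonalizable.

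With this hypothesis verified, the case (1) of Theorem~\ref{thm:main}~(i) applies directly: since $R$ is a PID and $G_{(0)}$ is diagonalizable, $G=\rho^V(\G)$ is diagonalizable as a subgroup of $\Aut_R R[x_1,x_2]$. By the definition of a diagonalizable algebraic $\G$-action recalled in this section, this is precisely the statement that $V$ is diagonalizable, which is what we wanted. So the whole argument is essentially a matter of threading together the definitions (the correspondence between algebraic $\G$-actions and their associated subgroups $\rho^V(\G)$, and its behavior under base change to $K$) and then invoking the two already-established results, Proposition~\ref{prop:serre} and Theorem~\ref{thm:main}~(i)(1).

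The one point that genuinely needs checking — and where I would slow down — is the identification $G_{(0)}=\rho^{V_K}(\G)$, i.e. that base-changing the automorphism group along $R\hookrightarrow K$ gives the automorphism group of the base-changed $\G$-action. This is asserted in the excerpt (``$V_S$ is an algebraic $\G$-action on $S[\x]$ with $\rho^{V_S}(\G)=\rho^V(\G)_S$''), so I would just cite it; the only subtlety is that for this to be useful we also need $V_K$ to still be a genuine algebraic $\G$-action, which requires that $K[x_1,x_2]=\bigoplus_{\gamma} (K\otimes_R V_\gamma)$ — this holds because $K$ is flat (indeed a localization) over $R$, so tensoring commutes with the direct sum decomposition. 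Everything else — that $\rho^V$ is a homomorphism, that conjugation of $\rho^V(\G)$ into $D_2(k)$ is the same as $V$-homogeneity of a system of coordinates — is already recorded in the text. Thus I do not expect a real obstacle here; the theorem is a corollary-style consequence, and the proof is the two-line deduction already hinted at in the sentence preceding the statement.

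I should also note one degenerate case worth a word: if $\Gamma_\G=\zs$ then $\rho^V(\G)$ is trivial and there is nothing to prove, and if $r$ or the action is otherwise trivial the same holds; so one may harmlessly assume $\Gamma_\G\neq\zs$, though in fact the cited results handle all cases uniformly and no case distinction is actually needed. In summary, the proof writes itself as: form $G=\rho^V(\G)$; observe $G_{(0)}=\rho^{V_K}(\G)$ is diagonalizable by Proposition~\ref{prop:serre}; apply Theorem~\ref{thm:main}~(i)(1) to conclude $G$, hence $V$, is diagonalizable over $R$.
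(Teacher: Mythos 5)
Your proposal is correct and is essentially the paper's own argument: the paper derives Theorem~\ref{thm:PID} in one sentence by noting that $\rho^V(\G)_{(0)}=\rho^{V_K}(\G)$ is diagonalizable by Proposition~\ref{prop:serre} and then invoking case (1) of Theorem~\ref{thm:main}~(i). Your additional checks (flatness of $K$ over $R$, the identification $\rho^{V_S}(\G)=\rho^V(\G)_S$) just make explicit what the paper leaves implicit.
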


When $R$ is an affine PID over $k=\C $, 
Theorem~\ref{thm:PID} 
is contained in 
Kraft-Russell~\cite[Thm.\ 3.2]{KR}. 
In fact, 
they treated actions of 
reductive groups more generally.

Corollary~\ref{cor:abel} 
is derived from Theorem~\ref{thm:PID} as follows. 
Let $\phi _1,\ldots ,\phi _r\in G$ 
be such that 
$G=\langle \phi _1\rangle 
\times \cdots \times \langle \phi _r\rangle $. 
Then, 
since 
$d_i:=\ord \phi _i$ is a divisor of $d$, 
there exists a primitive $d_i$-th root 
$\zeta _i\in k$ of unity for $i=1,\ldots ,r$. 
Set $\G =
\{ (\zeta _1^{i_1},\ldots ,\zeta _r^{i_r})\mid 
i_1,\ldots ,i_r\in \Z \} $. 
Then, 
$\Gamma _{\G }$ is equal to 
$\prod _{i=1}^r(\Z /d_i\Z )$. 
For each 
$\gamma =(\bar{i}_1,\ldots ,\bar{i}_r)\in \Gamma _{\G }$, 
we define 
$V_{\gamma }$ to be the set of $f\in \Rx $ 
for which $\phi _l(f)=\zeta _l^{i_l}f$ 
holds for $l=1,\ldots ,r$. 
Then, 
for each $f\in \Rx $, 
we have 
$$
f_{\gamma }:=|G|^{-1}
\sum _{l_1=0}^{d_1-1}\cdots \sum _{l_r=0}^{d_r-1}
\zeta _1^{-i_1l_1}\cdots \zeta _r^{-i_rl_r}
(\phi _1^{l_1}\circ \cdots \circ \phi _r^{l_r})(f)
\in V_{\gamma }. 
$$
Since 
$f=\sum _{\gamma \in \Gamma _{\G }}f_{\gamma }$, 
we see that 
$V=(V_{\gamma })_{\gamma \in \Gamma _{\G }}$ 
is an algebraic $\G $-action on $\Rx $ with 
$\rho ^V(\G )=G$. 
Hence, 
$G$ is diagonalizable by Theorem~\ref{thm:PID}.

Next, 
let $\G $ be a subgroup of $(k^*)^n$. 
For each $\gamma \in \Gamma _{\G }$, 
we define $\Rx _{\gamma }$ to be the $R$-submodule 
of $\Rx $ generated by the monomials 
$x_1^{i_1}\cdots x_n^{i_n}$ 
such that the image of 
$(i_1,\ldots ,i_n)$ in $\Gamma _{\G }$ 
is equal to $\gamma $. 
Then, 
we have 
\begin{equation}\label{eq:standard grading}
\Rx _{\gamma }
=\{ f\in \Rx \mid \delta _{\ba }(f)=\ba ^{\gamma }f
\text{ for all }\ba \in \G \} . 
\end{equation}
Hence, 
$V=(\Rx _{\gamma })_{\gamma \in \Gamma _{\G }}$ 
is an algebraic $\G $-action on $\Rx  $ 
such that $\phi _{\ba }^V=\delta _{\ba }$ 
for each $\ba \in \G $.

Now, 
assume that $R$ is a $k$-domain, 
and let $G$ be a subgroup of $\Aut _R\Rx $ 
such that $G_{(0)}$ is diagonalizable. 
Then, 
there exists $\psi \in \Aut _K\Kx $ 
and a subgroup $\G $ of $(k^*)^n$ such that 
$\psi ^{-1}\circ G\circ \psi 
=\{ \delta _{\ba }\mid \ba \in \G \} $. 
When this is the case, 
$V_K:=(\psi (\Kx _{\gamma }))_{\gamma \in \Gamma _{\G }}$ 
is an algebraic $\G $-action on $\Kx $ 
satisfying $\rho ^{V_K}(\G )=G_{(0)}$. 
Set 
\begin{equation}\label{eq:intersection}
V_{\gamma }:=\psi (\Kx _{\gamma })\cap \Rx 
\quad\text{for each}\quad \gamma \in \Gamma _{\G }. 
\end{equation}
We claim that 
$V:=(V_{\gamma })_{\gamma \in \Gamma _{\G }}$ 
is an algebraic $\G $-action on $\Rx $ 
with $\rho ^V(\G )=G$. 
In fact, 
take any $f\in \Rx $ 
and write $f=\sum _{\gamma }f_{\gamma }$, 
where $f_{\gamma }\in \psi (\Kx _{\gamma })$. 
Then, 
by Lemma~\ref{lem:kspan}, 
each $f_{\gamma }$ is a $k$-linear combination 
of $\phi (f)\in \Rx $ 
for $\phi \in \rho ^{V_K}(\G )=G_{(0)}$. 
Thus, 
$f_{\gamma }$ belongs to $\Rx $, 
and hence to $V_{\gamma }$. 
We remark that 
$f\in \Rx $ is $V$-homogeneous 
if and only if $\phi (f)\in kf$ 
holds for each $\phi \in G$. 
In this sense, 
$V$ is uniquely defined from $G$. 
If $n=2$, 
then $V_{\kappa (\p )}$ is diagonalizable 
for any prime ideal $\mathfrak{p}$ of $R$ 
by Proposition~\ref{prop:serre}. 
Hence, 
$G_{\mathfrak{p}}$ is diagonalizable 
as remarked after Problem~\ref{prob:main}.

Finally, 
we prove a lemma 
used in Section~\ref{sect:Gauss}. 
We call a sequence 
$f_1,\ldots ,f_r$ of elements of $\Rx $ 
a {\it partial system of coordinates} of $\Rx $ 
if there exist $f_{r+1},\ldots ,f_n\in \Rx $ 
such that $R[f_1,\ldots ,f_n]=\Rx $, 
or equivalently there exists $\phi \in \Aut _R\Rx $ 
for which $\phi (x_i)=f_i$ holds for $i=1,\ldots ,r$ 
(cf.~\cite[Prop.\ 1.1.6]{Essen}). 
When $r=1$, 
such an $f_1$ is called a {\it coordinate} of $\Rx $.

\begin{lem}\label{lem:partial system}
Let $R$ be a $k$-domain, 
and $G$ a subgroup of $\Aut _R\Rx $. 
Assume that there exists 
$\psi =(f_1,\ldots ,f_n)
\in \Aut _K\Kx $ 
with 
$\psi ^{-1}\circ G\circ \psi \subset D_n(k)$ 
for which $f_1,\ldots f_{n-1}$ 
form a partial system of 
coordinates of $\Rx $. 
Then, $G$ is diagonalizable. 
\end{lem}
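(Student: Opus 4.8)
The plan is to use the algebraic $\G $-action attached to $G$ in the paragraphs preceding this lemma, together with the observation that a coordinate completing $f_1,\ldots ,f_{n-1}$ becomes linear in $x_n$ after pulling back by $\psi $; that is what lets a single homogeneous component of such a coordinate do the job over $R$. Concretely, since $\psi ^{-1}\circ G\circ \psi \subset D_n(k)$, put $\G :=\{ \ba \in (k^*)^n\mid \delta _{\ba }\in \psi ^{-1}\circ G\circ \psi \} $, so that $\psi ^{-1}\circ G\circ \psi =\{ \delta _{\ba }\mid \ba \in \G \} $ and $G_{(0)}$ is diagonalizable; then $V=(V_{\gamma })_{\gamma \in \Gamma _{\G }}$ with $V_{\gamma }=\psi (\Kx _{\gamma })\cap \Rx $ is an algebraic $\G $-action on $\Rx $ with $\rho ^V(\G )=G$. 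It suffices to find $\theta \in \Aut _R\Rx $ such that $\theta (x_1),\ldots ,\theta (x_n)$ are all $V$-homogeneous, since then $V$, and hence $G$, is diagonalizable. Now $f_i=\psi (x_i)\in \psi (\Kx _{\gamma _i})$, where $\gamma _i\in \Gamma _{\G }$ denotes the image of the $i$-th coordinate unit vector; as $f_i\in \Rx $ for $i\leq n-1$, each such $f_i$ lies in $V_{\gamma _i}$ and is $V$-homogeneous. Hence it is enough to produce one $V$-homogeneous $g\in \Rx $ with $R[f_1,\ldots ,f_{n-1},g]=\Rx $, and then take $\theta :=(f_1,\ldots ,f_{n-1},g)$.

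Since $f_1,\ldots ,f_{n-1}$ is a partial system of coordinates, there is $\phi _0\in \Aut _R\Rx $ with $\phi _0(x_i)=f_i$ for $i\leq n-1$; put $h:=\phi _0(x_n)\in \Rx $, so $\phi _0=(f_1,\ldots ,f_{n-1},h)$ and $R[f_1,\ldots ,f_{n-1},h]=\Rx $. Regarding $\phi _0$ also as an element of $\Aut _K\Kx $ and setting $r:=\psi ^{-1}(h)$, we get $\psi ^{-1}\circ \phi _0=(x_1,\ldots ,x_{n-1},r)\in \Aut _K\Kx $, i.e.\ $K[x_1,\ldots ,x_{n-1},r]=\Kx $. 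Viewing $\Kx $ as the polynomial ring in $x_n$ over $B:=K[x_1,\ldots ,x_{n-1}]$, a generator of $B[x_n]$ over $B$ must have the form $cx_n+p$ with $c\in B^{*}=K^{*}$ and $p\in B$; thus $r=cx_n+p$. Writing $p=\sum _{\mu }p_{\mu }$ for the decomposition of $p$ into $\Gamma _{\G }$-homogeneous parts with respect to the standard grading of $B$, the $\Gamma _{\G }$-homogeneous component of $r$ of degree $\gamma $ equals $p_{\gamma }$ for $\gamma \neq \gamma _n$, and $cx_n+p_{\gamma _n}$ for $\gamma =\gamma _n$.

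Now write $h=\sum _{\gamma }h_{\gamma }$ with $h_{\gamma }\in \psi (\Kx _{\gamma })$ (the components of $h$ in $\Kx =\bigoplus _{\gamma }\psi (\Kx _{\gamma })$). By Lemma~\ref{lem:kspan} each $h_{\gamma }$ is a $k$-linear combination of $\phi (h)$ with $\phi \in G$, which lie in $\Rx $, so $h_{\gamma }\in \psi (\Kx _{\gamma })\cap \Rx =V_{\gamma }$. Since $h=\psi (r)$ and, by uniqueness of the decomposition along $\bigoplus _{\gamma }\psi (\Kx _{\gamma })$, $h_{\gamma }$ is the image under $\psi $ of the degree-$\gamma $ component of $r$, we obtain $h_{\gamma }=p_{\gamma }(f_1,\ldots ,f_{n-1})$ for $\gamma \neq \gamma _n$. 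Each such $h_{\gamma }$ therefore lies in $\Rx \cap K[f_1,\ldots ,f_{n-1}]=R[f_1,\ldots ,f_{n-1}]$, the equality holding because $\Rx =R[f_1,\ldots ,f_{n-1},h]$ is free over $R$ on the monomials in $f_1,\ldots ,f_{n-1},h$. Consequently $h-h_{\gamma _n}=\sum _{\gamma \neq \gamma _n}h_{\gamma }\in R[f_1,\ldots ,f_{n-1}]$, so $R[f_1,\ldots ,f_{n-1},h_{\gamma _n}]=\Rx $; since $g:=h_{\gamma _n}\in V_{\gamma _n}$ is $V$-homogeneous, $\theta :=(f_1,\ldots ,f_{n-1},g)\in \Aut _R\Rx $ satisfies $\theta ^{-1}\circ G\circ \theta \subset D_n(k)$, and $G$ is diagonalizable.

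The only delicate point is the determination of the shape of $r=\psi ^{-1}(h)$: once one knows $r$ is linear in $x_n$ over $K[x_1,\ldots ,x_{n-1}]$, all but one of the $\Gamma _{\G }$-homogeneous components of $h$ is forced into $R[f_1,\ldots ,f_{n-1}]$, and the remaining one serves as a legitimate replacement for $h$. Everything else is routine manipulation of the two decompositions $\Kx =\bigoplus _{\gamma }\Kx _{\gamma }=\bigoplus _{\gamma }\psi (\Kx _{\gamma })$ together with Lemma~\ref{lem:kspan}.
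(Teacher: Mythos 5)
Your proposal is correct and follows essentially the same route as the paper: you complete $f_1,\ldots ,f_{n-1}$ by some $h$, use that $\psi ^{-1}(h)$ is linear in $x_n$ over $K[x_1,\ldots ,x_{n-1}]$ (the paper phrases this as $g=uf_n+h$ with $h\in K[f_1,\ldots ,f_{n-1}]$), invoke Lemma~\ref{lem:kspan} and the equality $\Rx \cap K[f_1,\ldots ,f_{n-1}]=R[f_1,\ldots ,f_{n-1}]$, and replace $h$ by its single $V$-homogeneous component in the degree of $f_n$. The only difference is cosmetic: you work on the pullback side via $r=\psi ^{-1}(h)$, while the paper argues directly with $B[g]=\Kx =B[f_n]$.
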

\begin{proof}
In the situation of the lemma, 
we may define a subgroup $\G $ of $(k^*)^n$ 
and an algebraic $\G $-action $V$ on $\Rx $ 
as above. 
Then, 
$f_1,\ldots ,f_{n-1}$ are $V$-homogeneous. 
Set $A=R[f_1,\ldots ,f_{n-1}]$ 
and $B=K[f_1,\ldots ,f_{n-1}]$. 
Then, 
we have $B\cap \Rx =A$, 
since $f_1,\ldots ,f_{n-1}$ 
is a partial system of coordinates of $\Rx $. 
There exists $g\in \Rx $ such that 
$R[f_1,\ldots ,f_{n-1},g]=\Rx $. 
It suffices to show that 
$g$ is chosen to be $V$-homogeneous. 
Write 
$g=\sum _{\gamma \in \Gamma _{\G }}g_{\gamma }$, 
where $g_{\gamma }\in V_{\gamma }$ 
for each $\gamma \in \Gamma _{\G }$. 
Since $g_{\gamma }$ belongs to $\Rx $, 
we have $g_{\gamma }\in A=B\cap \Rx $ 
if and only if $g_{\gamma }\in B$. 
We show that this holds for each $\gamma \neq \mu $, 
where $\mu \in \Gamma _{\G }$ 
is such that $f_n\in \psi (\Kx _{\mu })$. 
Then, 
it follows that 
$R[f_1,\ldots ,f_{n-1},g_{\mu }]=\Rx $, 
and the proof is complete. 
Observe that $B[g]=\Kx =B[f_n]$. 
This implies that $g=uf_n+h$ 
for some $u\in K^*$ and $h\in B$. 
Write $h=\sum _{\gamma \in \Gamma _{\G }}h_{\gamma }$, 
where $h_{\gamma }\in \psi (\Kx _{\gamma })$. 
Then, 
we have $g_{\mu }=uf_n+h_{\mu }$, 
and $g_{\gamma }=h_{\gamma }$ 
for each $\gamma \neq \mu $. 
Since 
$f_1,\ldots ,f_{n-1}$ 
are $V$-homogeneous, 
$h_{\gamma }$ belongs to $B$ 
for each $\gamma \in \Gamma _{\G }$. 
Therefore, 
$g_{\gamma }$ belongs to $B$ 
if $\gamma \neq \mu $. 
\end{proof}

\section{Centrizer}\label{sect:DC}
\setcounter{equation}{0}

Throughout this section, 
we assume that $\G $ 
is a subgroup of $(k^*)^n$ 
not equal to $\{ e\} $. 
For each $\gamma \in \Gamma _{\G }$, 
we define $\Rx _{\gamma }$ as in Section~\ref{sect:action}, 
where $R$ may be any commutative ring for the moment. 
We say that $f\in \Rx $ 
is $\G $-{\it homogeneous} 
if $f$ belongs to $\Rx _{\gamma }$ 
for some $\gamma \in \Gamma _{\G }$. 
Let $\phi =(f_1,\ldots ,f_n)$ 
be an element of $\Aut _R\Rx $. 
We say that $\phi $ is $\G $-{\it homogeneous} 
if $f_i$ belongs to $\Rx _{\gamma _i}$ 
for $i=1,\ldots ,n$. 
We remark that, 
if $R$ is a domain, 
and $f_1,\ldots ,f_n$ are $\G $-homogeneous, 
then $(f_{\sigma (1)},\ldots ,f_{\sigma (n)})$ 
is $\G $-homogeneous for some permutation 
$\sigma \in S_n$.  
Actually, 
since $\det J\phi $ belongs to $\Rx ^*=R^*$, 
the linear parts of $f_1,\ldots ,f_n$ 
are linearly independent over $R$. 
Hence, 
there exists $\sigma \in S_n$ such that 
the linear monomials $x_1,\ldots ,x_n$ 
appear in $f_{\sigma (1)},\ldots ,f_{\sigma (n)}$, 
respectively.

Assume that $R$ is a $k$-algebra. 
Then, 
in view of (\ref{eq:standard grading}), 
we see that $\phi$ 
is $\G $-homogeneous if and only if 
$\delta _{\ba }(f_i)=a_if_i$ 
for all $\ba =(a_1,\ldots ,a_n)\in \G $ 
and $i=1,\ldots ,n$, 
and hence if and only if 
$\delta _{\ba }\circ \phi =\phi \circ \delta _{\ba }$ 
for all $\ba \in \G $. 
Thus, 
the set $C_{\G }(R)$ 
of $\G $-homogeneous elements of $\Aut _R\Rx $ 
is equal to the centrizer of 
$\{ \delta _{\ba }\mid \ba \in \G \} $ 
in $\Aut _R\Rx $. 
We note that $C_{\G }(R)$ is a subgroup of 
$\Aut _R\Rx $ even if $R$ does not contain $k$.

Now, 
assume that $n=2$, 
and let $h$ be a coordinate of $\Rx $. 
Then, 
there exist 
$\phi \in \Aut _R\Rx $ and $i\in \{ 1,2\} $ 
such that $\phi (x_i)=h$. 
If furthermore $h$ is $\G $-homogeneous, 
then we have the following lemma.

\begin{lem}\label{lem:homoge autom}
Assume that $R$ is a $k$-domain 
such that $R^*\cup \zs $ is a field. 
If $h$ is a $\G $-homogeneous coordinate of $\Rx $, 
then there exist $\phi \in C_{\G }(R)$ 
and $i\in \{ 1,2\} $ 
such that $\phi (x_i)=h$. 
\end{lem}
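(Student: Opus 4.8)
The plan is to extend $h$ to an arbitrary $R$-automorphism of $\Rx $ and then replace its second coordinate by an appropriate $\G $-homogeneous component; the hypothesis that $R^*\cup \{ 0\} $ is a field enters precisely when selecting that component.

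Since $h$ is a coordinate, after possibly composing with the transposition $(x_2,x_1)$ we may write $h=\phi _0(x_1)$ with $\phi _0=(h,f_0)\in \Aut _R\Rx $. As $\phi _0$ is an automorphism, $h$ and $f_0$ generate $\Rx $ over $R$, hence are algebraically independent over $R$; thus $A:=R[h]$ is a polynomial ring in $h$ over $R$ (so $A^*=R^*$), and $\Rx =A[f_0]$ is a polynomial ring in $f_0$ over $A$. Fix $\mu \in \Gamma _{\G }$ with $h\in \Rx _{\mu }$. For each $\ba =(a_1,a_2)\in \G $, conjugating $\phi _0$ by $\delta _{\ba }$ and using $\delta _{\ba }(h)=\ba ^{\mu }h$, one checks that
$$
\tilde \phi _{\ba }:=(\delta _{\ba }\circ \phi _0\circ \delta _{\ba }^{-1})
\circ (a_1(\ba ^{\mu })^{-1}x_1,\, x_2)
$$
lies in $\Aut _R\Rx $, sends $x_1$ to $h$, and sends $x_2$ to $g_{\ba }:=a_2^{-1}\delta _{\ba }(f_0)$. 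Hence $A[g_{\ba }]=R[h,g_{\ba }]=\Rx =A[f_0]$, so, $g_{\ba }$ being transcendental over $A$, we get $g_{\ba }=u_{\ba }f_0+v_{\ba }$ with $u_{\ba }\in A^*=R^*$ and $v_{\ba }\in A=R[h]$.

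Now write $f_0=\sum _{\gamma }(f_0)_{\gamma }$ with $(f_0)_{\gamma }\in \Rx _{\gamma }$. Applying Lemma~\ref{lem:kspan} to the standard algebraic $\G $-action $V=(\Rx _{\gamma })_{\gamma }$, for which $\rho ^V(\G )=\{ \delta _{\ba }\mid \ba \in \G \} $, each $(f_0)_{\gamma }$ is a $k$-linear combination of the $\delta _{\ba }(f_0)=a_2g_{\ba }$, hence of the $g_{\ba }$. Substituting $g_{\ba }=u_{\ba }f_0+v_{\ba }$ yields $(f_0)_{\gamma }=U_{\gamma }f_0+V_{\gamma }$, where $U_{\gamma }$ is a $k$-linear combination of the units $u_{\ba }$ and $V_{\gamma }\in R[h]$ (note that $(f_0)_{\gamma }$ has $f_0$-degree at most $1$ over $A$ automatically, since it is a $k$-combination of the $g_{\ba }$).

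The crux — and the step I expect to be the main obstacle — is now at hand: a $k$-linear combination of units of $R$ is a priori merely an element of $R$, but since $R^*\cup \{ 0\} $ is a field containing $k$, the element $U_{\gamma }$ lies in $R^*\cup \{ 0\} $, so each $U_{\gamma }$ is either $0$ or a unit of $R$. Comparing $f_0$-coefficients in $\sum _{\gamma }(f_0)_{\gamma }=f_0$ inside $A[f_0]$ gives $\sum _{\gamma }U_{\gamma }=1$, so $U_{\gamma _0}\neq 0$ for some $\gamma _0$, whence $U_{\gamma _0}\in R^*$. Then $(f_0)_{\gamma _0}=U_{\gamma _0}f_0+V_{\gamma _0}$ with $U_{\gamma _0}\in A^*$ and $V_{\gamma _0}\in A$, so $R[h,(f_0)_{\gamma _0}]=R[h,f_0]=\Rx $, and $(h,(f_0)_{\gamma _0})$ — which differs from $\phi _0$ only by an elementary substitution in the second variable — is an $R$-automorphism both of whose coordinates are $\G $-homogeneous. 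By the remark recorded after the definition of $\G $-homogeneous automorphisms, a suitable reordering of it lies in $C_{\G }(R)$ and has $h$ among its coordinates, completing the proof.
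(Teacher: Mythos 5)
Your proof is correct, and at the decisive step it takes a genuinely different route from the paper's. Both arguments share the same frame: pick a complement $f_0$ with $R[h,f_0]=\Rx $, observe that $R[h,\delta _{\ba }(f_0)]=\Rx $ for every $\ba \in \G $ (your explicit automorphism $\tilde \phi _{\ba }$ is just a repackaging of the paper's computation $R[h,g]=\delta _{\ba }(R[h,g])=R[h,\delta _{\ba }(g)]$), so that $\delta _{\ba }(f_0)\in R^*f_0+R[h]$, and finish with the permutation remark once a $\G $-homogeneous complement to $h$ is produced. Where you diverge is in locating that complement. The paper first pins down the distinguished degree $\gamma _0=\gamma _1+\gamma _2-\mu $ by a Jacobian argument ($\det J(h,g_{\gamma })\in \Rx _{\gamma -\gamma _0}$, and the sum of these is a unit, so $g_{\gamma _0}$ is algebraically independent of $h$ over $R$), and then, for each $\lambda \neq \gamma _0$, chooses $\ba $ with $\ba ^{\lambda }\neq \ba ^{\gamma _0}$ and compares homogeneous components of $\delta _{\ba }(g)=ug+g'$ to force $g_{\lambda }\in R[h]$, the hypothesis on $R^*\cup \zs $ entering as ``$\ba ^{\lambda }-u$ is a unit.'' You instead invoke Lemma~\ref{lem:kspan} for the standard $\G $-action to write each component $(f_0)_{\gamma }$ as a $k$-linear combination of the $\delta _{\ba }(f_0)$, hence as $U_{\gamma }f_0+V_{\gamma }$ with $V_{\gamma }\in R[h]$ and $U_{\gamma }$ a $k$-linear combination of units; the field hypothesis (legitimately, since $k^*\subset R^*$, so $R^*\cup \zs $ is closed under $k$-linear combinations) makes each $U_{\gamma }$ zero or a unit, and $\sum _{\gamma }U_{\gamma }=1$ yields a component with unit coefficient. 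This avoids the Jacobian computation and the per-$\lambda $ choice of $\ba $ entirely, and it reuses Lemma~\ref{lem:kspan} in the same spirit as the paper does elsewhere (e.g.\ to prove $\G $-homogeneity of $h$ in Section~\ref{sect:Proof}); the only extra information the paper's argument provides---that every component of the complement other than $g_{\gamma _0}$ lies in $R[h]$---is not needed for the statement of Lemma~\ref{lem:homoge autom}.
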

\begin{proof}
There exists $g\in \Rx $ such that $R[h,g]=\Rx $. 
We show that 
$g$ is chosen to be $\G $-homogeneous. 
Then, 
$(g,h)$ or $(h,g)$ belongs to $C_{\G }(R)$ 
as remarked. 
Write 
$g=\sum _{\gamma \in \Gamma _{\G }}g_{\gamma }$, 
where $g_{\gamma }\in \Rx _{\gamma }$ 
for each $\gamma $. 
Then, 
we have 
\begin{equation}\label{eq:jacob indep}
\sum _{\gamma \in \Gamma _{\G }}\det J(h,g_{\gamma })
=\det J(h,g)\in R^*. 
\end{equation}
Put $\gamma _0:=\gamma _1+\gamma _2-\mu $, 
where $\mu \in \Gamma _{\G }$ is such that 
$h$ belongs to $\Rx _{\mu }$. 
Then, 
$\det J(h,g_{\gamma })$ belongs to 
$\Rx _{\gamma -\gamma _0}$ 
for each $\gamma $. 
From (\ref{eq:jacob indep}), 
it follows that 
$\det J(h,g_{\gamma _0})\neq 0$. 
This implies that $h$ and $g_{\gamma _0}$ 
are algebraically independent over $R$ 
(cf.~\cite[Prop.\ 1.1.31]{Essen}). 
We show that $g_{\gamma }$ belongs to $R[h]$ 
for each $\gamma \neq \gamma _0$. 
Then, 
it follows that 
$R[h,g_{\gamma _0}]=R[h,g]=\Rx $, 
and the proof is complete. 
Fix any $\lambda \neq \gamma _0$, 
and take $\ba \in \G $ such that 
$\ba ^{\lambda }\neq \ba ^{\gamma _0}$. 
Then, 
we have 
$$
R[h,g]=\delta _{\ba }(R[h,g])
=R[\delta _{\ba }(h),\delta _{\ba }(g)]
=R[\ba ^{\mu }h,\delta _{\ba }(g)]
=R[h,\delta _{\ba }(g)]. 
$$
Hence, 
we may write $\delta _{\ba }(g)=ug+g'$, 
where $u\in R^*$ 
and $g'\in R[h]$. 
Write 
$g'=\sum _{\gamma \in \Gamma _{\G }}g_{\gamma }'$, 
where $g_{\gamma }'\in \Rx _{\gamma }$. 
Then, 
$g_{\gamma }'$ belongs to $R[h]$ 
for each $\gamma $, 
since $h$ is $\G $-homogeneous by assumption. 
On the other hand, 
from the equality 
$$
\sum _{\gamma \in \Gamma _{\G }}\ba^{\gamma }g_{\gamma }
=\delta _{\ba }(g)=ug+g'
=\sum _{\gamma \in \Gamma _{\G }}
(ug_{\gamma }+g_{\gamma }'), 
$$
we see that 
$(\ba ^{\gamma }-u)g_{\gamma }=g_{\gamma }'$ 
holds for each $\gamma $. 
Since $g_{\gamma _0}$ and $h$ 
are algebraically independent over $R$, 
this implies that $\ba ^{\gamma _0}=u$. 
By the choice of $\ba $, 
it follows that 
$\ba ^{\lambda }\neq u$. 
Since $\ba ^{\lambda }$ and $u$ 
are units of $R$, 
and $R^*\cup \zs $ is a field by assumption, 
we know that $\ba ^{\lambda }-u$ is a unit of $R$. 
Therefore, 
$g_{\lambda }=(\ba ^{\lambda }-u)^{-1}g_{\lambda }'$ 
belongs to $R[h]$. 
\end{proof}

Next, 
let $R$ be any $k$-algebra, 
and $\mathfrak{p}$ a maximal ideal of $R$ 
with $\kappa :=R/\p $. 
For each $f\in \Rx $, 
we denote by $\bar{f}$ 
the image of $f$ in $\kapx $. 
The rest of this section 
is devoted to the proof of the following proposition. 
This result plays an important role 
in proving the case (1) of 
Theorem~\ref{thm:PID} (i).

\begin{prop}\label{thm:homo autom}
Assume that $n=2$, 
and let $h$ be a coordinate of $\kapx $. 
If $h$ is $\G $-homogeneous, 
then there exist 
$(g_1,g_2)\in C_{\G }(R)$, 
$a\in \kappa ^*$ and $i\in \{ 1,2\} $ 
such that 
$\det J(g_1,g_2)=1$ and $h=a\bar{g}_i$. 
\end{prop}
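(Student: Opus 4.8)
The plan is to bootstrap from Lemma~\ref{lem:homoge autom} and then lift along the surjection $R\to\kappa$, being careful with scalars. Since $\kappa$ is a field, $\kappa^*\cup\zs$ is a field, so Lemma~\ref{lem:homoge autom} applied over $\kappa$ gives $\phi=(\phi_1,\phi_2)\in C_\G(\kappa)$ and $i\in\{1,2\}$ with $\phi_i=h$. For ${\bf c}=(c_1,c_2)\in(\kappa^*)^2$ write $\delta_{{\bf c}}:=(c_1x_1,c_2x_2)\in C_\G(\kappa)$; as $\phi\circ\delta_{{\bf c}}$ sends $x_i$ to $c_i\phi_i=c_ih$, the proposition will follow, with $a=c_i^{-1}$, from the claim:

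$(\star)$ \emph{for every $\psi\in C_\G(\kappa)$ there exist $(g_1,g_2)\in C_\G(R)$ with $\det J(g_1,g_2)=1$ and ${\bf c}\in(\kappa^*)^2$ such that $(\bar g_1,\bar g_2)=\psi\circ\delta_{{\bf c}}$.}

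I would prove $(\star)$ by induction on $\deg\psi(x_1)+\deg\psi(x_2)$. If $\psi$ is affine, let $M\in GL_2(\kappa)$ be its linear part. Since $\psi\in C_\G(\kappa)$, a nonzero off-diagonal entry of $M$ forces $\gamma_1=\gamma_2$ and a nonzero constant term of $\psi(x_j)$ forces $\gamma_j=0$; hence any $R$-lift respecting the zero pattern of $M$ and the constant terms again lies in $C_\G(R)$. The linear part of $\psi\circ\delta_{{\bf c}}$ is $\mathrm{diag}({\bf c})M$, and using the two parameters in ${\bf c}$ one arranges that this matrix has an entry equal to $1$ and determinant $1$; such an $SL_2(\kappa)$-matrix lifts to $SL_2(R)$ (lift the entries other than the one forced by $\det=1$, keeping the zero pattern), and this linear lift together with arbitrary lifts of the constant terms produces $(g_1,g_2)$.

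For the inductive step, suppose $\max\{\deg\psi(x_1),\deg\psi(x_2)\}\ge 2$. By van der Kulk's theorem~\cite{Kulk} there is an elementary automorphism — say $\mu=(x_1+cx_2^e,x_2)$ with $c\in\kappa^*$ and $e\ge1$, or the analogue modifying $x_2$, depending on which degree is larger — such that $\psi=\psi'\circ\mu$, where $\psi':=(\psi(x_1)-c\psi(x_2)^e,\psi(x_2))$ has strictly smaller total degree. Moreover $e\gamma_2=\gamma_1$, because the top-degree parts of $\psi(x_1)$ and of $\psi(x_2)^e$ lie in $\kapx_{\gamma_1}$ and $\kapx_{e\gamma_2}$ and are proportional; hence $\mu\in C_\G(\kappa)$. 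By induction there are $(\tilde g_1,\tilde g_2)\in C_\G(R)$ with $\det J=1$ and ${\bf c}\in(\kappa^*)^2$ with $(\bar{\tilde g}_1,\bar{\tilde g}_2)=\psi'\circ\delta_{{\bf c}}$. A short computation gives $\delta_{{\bf c}}^{-1}\circ\mu\circ\delta_{{\bf c}}=(x_1+c_1c_2^{-e}c\,x_2^e,\,x_2)$, which, because $x_2^e\in R[\x]_{\gamma_1}$, lifts — taking any lift of $c_1c_2^{-e}c$ to $R$ — to $\tilde\mu\in C_\G(R)$ with $\det J\tilde\mu=1$. Then $(g_1,g_2):=(\tilde g_1,\tilde g_2)\circ\tilde\mu$ lies in $C_\G(R)$, has $\det J=1$, and reduces to $(\psi'\circ\delta_{{\bf c}})\circ(\delta_{{\bf c}}^{-1}\circ\mu\circ\delta_{{\bf c}})=\psi'\circ\mu\circ\delta_{{\bf c}}=\psi\circ\delta_{{\bf c}}$, which establishes $(\star)$.

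The main obstacle is that an element of $\kappa^*$ — and an $e$-th root of one — need not lift to $R$, so $\phi$ cannot be lifted coordinatewise. The induction succeeds because the unavoidable ambiguity is carried in a single diagonal factor $\delta_{{\bf c}}$, and a diagonal automorphism conjugates an elementary automorphism to an elementary automorphism while only rescaling its coefficient, which is then freely liftable; this is the content of the computation of $\delta_{{\bf c}}^{-1}\circ\mu\circ\delta_{{\bf c}}$ above. The sole genuinely arithmetic input is the base-case observation that, after rescaling, the linear part becomes an $SL_2(\kappa)$-matrix with an entry $1$ and so lifts to $SL_2(R)$; this is exactly why one must first replace $h$ by the $\G$-homogeneous automorphism $\phi$ furnished by Lemma~\ref{lem:homoge autom}.
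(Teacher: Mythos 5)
Your argument is correct and is essentially the paper's own proof: after invoking Lemma~\ref{lem:homoge autom} over $\kappa$, you peel off $\G$-homogeneous elementary automorphisms via the same van der Kulk/Cohn degree-reduction that drives Lemma~\ref{lem:her}, lift each of them to $C_{\G}(R)$ with Jacobian determinant $1$ (possible because the homogeneous coefficient lifts along $R\to\kappa$), and absorb the leftover diagonal factor into the scalar $a$. The only differences are organizational: you merge the decomposition (Lemma~\ref{lem:her}) and the lifting into a single induction carrying the diagonal twist $\delta_{\mathbf{c}}$, and you handle the affine base case by a direct $SL_2(R)$-lift after normalizing one entry to $1$, rather than by the paper's factorization of affine elements of $C_{\G}(\kappa)$ into elementary and diagonal pieces.
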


When $k$ is an algebraically closed field, 
Proposition~\ref{thm:homo autom} 
easily follows from 
Lemma~\ref{lem:homoge autom}, 
since $\kappa =k$ is contained in $R$. 
In the general case, 
Proposition~\ref{thm:homo autom} 
is proved by using 
a lifting technique of automorphisms. 
If $\psi =(f_1,\ldots ,f_n)$ 
is an endomorphism 
of the $R$-algebra $\Rx $, 
then 
$\bar{\psi }:=\id _{\kappa }\otimes \psi 
=(\bar{f}_1,\ldots ,\bar{f}_n)$ 
is an endomorphism of the $\kappa $-algebra $\kapx $. 
We remark that 
$\psi \in \Aut _R\Rx $ implies 
$\bar{\psi }\in \Aut _{\kappa }\kapx $, 
since 
$(\psi _1\circ \psi _2)^-=\bar{\psi }_1\circ \bar{\psi }_2$ 
holds for endomorphisms $\psi _1$ and $\psi _2$ 
of $\Rx $. 
We call $\psi \in \Aut _R\Rx $ a 
{\it lift} of $\phi \in \Aut _{\kappa }\kapx $ 
if $\bar{\psi }=\phi $. 
In general, 
it is not clear whether every element of 
$C_{\G }(\kappa )$ has a lift in $C_{\G }(R)$. 
We say that $\sigma \in \Aut _{\kappa }\kapx $ 
is {\it elementary} if 
there exist 
$1\leq i\leq n$ 
and $f\in \kappa [\{ x_j\mid j\neq i\} ]$ 
such that 
\begin{equation}\label{eq:elementary}
\sigma =(x_1,\ldots ,x_{i-1},x_i+f,x_{i+1},\ldots ,x_n). 
\end{equation}
Note that (\ref{eq:elementary}) 
is $\G $-homogeneous 
if and only if $f$ belongs to 
$\kappa [\x ]_{\gamma _i}$. 
If this is the case, 
there exists 
$g\in \Rx _{\gamma _i}\cap R[\{ x_j\mid j\neq i\} ]$ 
such that $\bar{g}=f$. 
Then, 
the elementary automorphism 
$$
\ep =
(x_1,\ldots ,x_{i-1},x_i+g,x_{i+1},\ldots ,x_n)
\in C_{\G }(R)
$$
is a lift of $\sigma $. 
Clearly, 
we have $\det J\ep =1$.

Now, 
let $k$ be any field. 
For each $\phi =(f_1,f_2)\in \Aut _kk[x_1,x_2]$, 
we have $\deg \phi :=\deg f_1+\deg f_2\geq 2$. 
It is well known 
(cf.\ e.g.~\cite[Thm.\ 8.5]{Cohn}) 
that, 
if $\deg \phi >2$, 
then 
there exist $c\in k^*$ and $l\geq 1$ 
such that 
\begin{equation}\label{eq:er}
\deg (f_1-cf_2^l)<\deg f_1
\quad\text{or}\quad 
\deg (f_2-cf_1^l)<\deg f_2. 
\end{equation}
Using this fact, 
we can prove the following lemma.

\begin{lem}\label{lem:her}
If $n=2$, 
then each $\phi \in C_{\G }(k)$ is written as 
$\phi =\sigma _1\circ \cdots \circ \sigma _r\circ \tau $ 
for some $r\geq 0$, 
where $\sigma _1,\ldots ,\sigma _r\in C_{\G }(k)$ 
are elementary, 
and $\tau \in D_2(k)$. 
\end{lem}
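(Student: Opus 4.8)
The plan is to run the standard "elementary reduction" argument for $\Aut_kk[x_1,x_2]$, but to carry it out $\G$-equivariantly so that every elementary factor produced lies in $C_{\G}(k)$. Let $\phi=(f_1,f_2)\in C_{\G}(k)$, so $f_i\in k[\x]_{\gamma_i}$ for $i=1,2$. I will induct on $\deg\phi=\deg f_1+\deg f_2$. If $\deg\phi=2$ then $\phi\in\mathfrak{A}_2(k)$; being $\G$-homogeneous forces each $f_i$ to be (up to the permutation remark at the start of Section~\ref{sect:DC}) a scalar multiple of $x_i$ plus possibly a constant, and a $\G$-homogeneous constant term in degree $\geq1$ of $\Gamma_{\G}$ vanishes because $k[\x]_{\gamma}\cap k=\zs$ for $\gamma\neq0$; after composing with a permutation (which, if nontrivial, would violate $\G$-homogeneity unless $\G$ permits it, in which case it is itself in $C_{\G}(k)$) we land in $D_2(k)$, giving $r=0$.

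For the inductive step, assume $\deg\phi>2$. Apply \eqref{eq:er}: there exist $c\in k^*$ and $l\geq1$ with, say, $\deg(f_1-cf_2^l)<\deg f_1$. The key observation is that $f_2^l\in k[\x]_{l\gamma_2}$, and since $f_1-cf_2^l$ is a difference of a $\G$-homogeneous element of weight $\gamma_1$ and one of weight $l\gamma_2$, and this difference has strictly smaller degree than $f_1$, the leading part of $f_1$ must be exactly $cf_2^l$; this forces $\gamma_1=l\gamma_2$, so $f_1-cf_2^l$ is again $\G$-homogeneous of weight $\gamma_1$. Hence $\sigma:=(x_1-cx_2^l,x_2)$ lies in $C_{\G}(k)$, it is elementary, and $\sigma\circ\phi=(f_1-cf_2^l,f_2)$ has strictly smaller degree and still lies in $C_{\G}(k)$. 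By induction $\sigma\circ\phi=\sigma_1\circ\cdots\circ\sigma_r\circ\tau$ with $\sigma_j\in C_{\G}(k)$ elementary and $\tau\in D_2(k)$, so $\phi=\sigma^{-1}\circ\sigma_1\circ\cdots\circ\sigma_r\circ\tau$, and $\sigma^{-1}=(x_1+cx_2^l,x_2)\in C_{\G}(k)$ is again elementary. The symmetric case is identical with the roles of the two coordinates exchanged.

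The main obstacle is the bookkeeping around $\G$-homogeneity at the bottom of the induction: I must check carefully that when $\deg\phi=2$ the $\G$-homogeneous affine automorphism is genuinely in $D_2(k)$ (no translation parts, the linear part diagonal up to a coordinate permutation, and that permutation, if present, commutes with $\delta_{\ba}$ only when it is compatible with $\G$ — but in that case the transposition itself is $\G$-homogeneous, hence in $C_{\G}(k)$, and one can absorb it; alternatively one argues the two linear forms $f_1,f_2$ have disjoint supported variables by the determinant argument from Section~\ref{sect:DC} and disjoint $\G$-weights, pinning each to a single scaled variable). A secondary point to verify is that \eqref{eq:er} applies verbatim over an arbitrary field $k$ — this is the cited fact from \cite{Cohn}, valid for all fields — and that the inequality it provides is strict, which is what drives the degree induction. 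Everything else is routine.
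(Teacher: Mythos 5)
Your overall strategy is the same as the paper's: induct on $\deg \phi $, use (\ref{eq:er}) to find an elementary reduction, and deduce $\gamma _1=l\gamma _2$ from the cancellation of top-degree terms so that the elementary factor lies in $C_{\G }(k)$. The inductive step's key idea is therefore sound, but as written the proof has two genuine defects. The more serious one is the base case $\deg \phi =2$. The standing hypothesis is only $\G \neq \{ e\} $, so the case $\gamma _1=\gamma _2$ (then both nonzero) is allowed, and there $\kx _{\gamma _1}$ contains $kx_1+kx_2$, so a $\G $-homogeneous affine automorphism can be an arbitrary linear one, e.g.\ $(x_1+x_2,\,x_1-x_2)$. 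Your claim that homogeneity pins each $f_i$ to a scalar multiple of a single variable is false here, and ``absorbing a permutation'' does not help: a transposition is neither elementary nor in $D_2(k)$, so it must itself be factored. What is needed is the Gaussian-elimination factorization of a linear automorphism into shears (all of which lie in $C_{\G }(k)$ precisely because $\gamma _1=\gamma _2$) and a diagonal map; this is the ``follows from linear algebra'' step in the paper. In addition, when one of $\gamma _1,\gamma _2$ is $0$ a translation term genuinely survives (e.g.\ $(ax_1,bx_2+c)\in C_{\G }(k)$ when $\gamma _2=0$), so your conclusion ``$r=0$, land in $D_2(k)$'' is wrong there; one elementary factor $(x_1,x_2+c)\in C_{\G }(k)$ is required, exactly as in the paper's treatment of that subcase.

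The second defect is the composition bookkeeping. With the paper's convention $(\phi \circ \psi )(x_i)=\phi (\psi (x_i))$, the automorphism $(f_1-cf_2^l,f_2)$ is $\phi \circ \sigma $, not $\sigma \circ \phi $ (the latter substitutes $x_1-cx_2^l$ into the $f_i$). Consequently, after applying the induction hypothesis the spare factor $\sigma ^{-1}$ appears to the \emph{right} of the diagonal $\tau $, and your final rearrangement does not yield the asserted normal form unless you also observe that $\tau ^{-1}\circ \sigma ^{-1}\circ \tau $ is again an elementary element of $C_{\G }(k)$. This is precisely why the paper begins by reducing the statement to the form $\phi =\tau \circ \sigma _1\circ \cdots \circ \sigma _r$ with the diagonal on the left. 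Both gaps are repairable, but as written the argument does not go through.
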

\begin{proof}
If $\sigma \in C_{\G }(k)$ is elementary 
and $\tau \in D_2(k)$, 
then 
$\tau ^{-1}\circ \sigma \circ \tau \in C_{\G }(k)$ 
is elementary. 
Hence, 
it suffices to show that 
$\phi 
=\tau \circ \sigma _1\circ \cdots \circ \sigma _r$ 
for some $\tau $ and $\sigma _1,\ldots ,\sigma _r$ 
as in the lemma. 
We prove this statement 
by induction on $\deg \phi $.

By assumption, 
$f_i:=\phi (x_i)$ belongs to 
$\kx _{\gamma _i}$ for $i=1,2$. 
First, 
assume that $\deg \phi =2$, i.e., 
$\deg f_1=\deg f_2=1$. 
If $\gamma _1\neq \gamma _2$ and $\gamma _1\neq 0$, 
then we have $f_1=ax_1$ and $f_2=bx_2+c$ 
for some $a,b\in k^*$ and $c\in k$, 
where $c\neq 0$ only if $\gamma _2=0$. 
Since $\phi =(ax_1,bx_2)\circ (x_1,x_2+c)$ 
and $(x_1,x_2+c)\in C_{\G }(k)$, 
the assertion is true. 
The case $\gamma _1\neq \gamma _2$ and $\gamma _2\neq 0$ 
is similar. 
If $\gamma _1=\gamma _2$, 
then $\gamma _1$ and $\gamma _2$ are nonzero, 
for otherwise $\Gamma _{\G }=\zs $, 
contradicting $\G \neq \{ e\} $. 
Hence, 
$f_1$ and $f_2$ have no constant terms. 
Thus, 
$\phi $ is a linear automorphism. 
In this case, 
the assertion follows from linear algebra.

Next, 
assume that $\deg \phi >2$. 
Then, 
there exist $c\in k^*$ and $l\geq 1$ 
for which one of the inequalities in (\ref{eq:er}) holds. 
Since both cases are similar, 
we assume the former case. 
In this case, 
a common monomial appears in $f_1$ and $f_2^l$. 
Since $f_i$ belongs to $\kx _{\gamma _i}$ for $i=1,2$, 
it follows that $\gamma _1=l\gamma _2$. 
Hence, 
$\sigma :=(x_1-cx_2^l,x_2)$ belongs to $C_{\G }(k)$. 
Since $\phi $ belongs to $C_{\G }(k)$ by assumption, 
$\phi \circ \sigma =(f_1-cf_2^l,f_2)$ 
also belongs to $C_{\G }(k)$. 
By (\ref{eq:er}), 
$\deg \phi \circ \sigma $ 
is less than $\deg f_1+\deg f_2=\deg \phi $. 
Therefore, 
by induction assumption, 
we may write 
$\phi \circ \sigma 
=\tau \circ \sigma _1\circ \cdots \circ \sigma _r$, 
where 
$\tau $ and $\sigma _1,\ldots ,\sigma _r$ 
are as in the lemma. 
Since 
$\phi 
=\tau \circ \sigma _1\circ \cdots \circ 
\sigma _r\circ \sigma ^{-1}$ 
with $\sigma ^{-1}=(x_1+cx_2^l,x_2)\in C_{\G }(k)$, 
the assertion holds true. 
\end{proof}

Let us complete the proof of 
Proposition~\ref{thm:homo autom}. 
Since $\kappa $ is an extension field of $k$, 
and since $h$ is a $\G $-homogeneous coordinate of 
$\kappa [\x ]$, 
there exist $\phi \in C_{\G }(\kappa )$ 
and $i\in \{ 1,2\} $ such that $\phi (x_i)=h$ 
by Lemma~\ref{lem:homoge autom}. 
By Lemma~\ref{lem:her}, 
we may write 
$\phi =\sigma _1\circ \cdots \circ \sigma _r\circ \tau $ 
for some $r\geq 0$, 
where $\sigma _1,\ldots ,\sigma _r\in C_{\G }(\kappa )$ 
are elementary, 
and $\tau =(a_1x_1,a_2x_2)$ 
with $a_1,a_2\in \kappa ^*$. 
For $j=1,\ldots ,r$, 
there exists a lift 
$\ep _j\in C_{\G }(R)$ 
of $\sigma _j$ with $\det J\ep _j=1$ as mentioned. 
Then, 
$(g_1,g_2):=\ep_1\circ \cdots \circ \ep _r$ 
belongs to $C_{\G }(R)$, 
and satisfies $\det J(g_1,g_2)=1$. 
Moreover, 
we have 
$$
\phi 
=\bar{\ep }_1\circ \cdots \circ \bar{\ep }_r
\circ \tau 
=(\ep _1\circ \cdots \circ \ep _r)^-\circ \tau 
=(\bar{g}_1,\bar{g}_2)\circ \tau 
=(a_1\bar{g}_1,a_2\bar{g}_2). 
$$
Therefore, 
we get $h=\phi (x_i)=a_i\bar{g}_i$. 
This completes the proof of 
Proposition~\ref{thm:homo autom}.

\section{Case (1) of Theorem~\ref{thm:PID} (i)}
\label{sect:Proof}
\setcounter{equation}{0}

The goal of this section is to 
prove the case (1) of Theorem~\ref{thm:PID} (i). 
We may assume that $G\neq \{ \id \} $. 
By assumption, 
there exist 
$\psi =(f_1,f_2)\in \Aut _K\Kx $ and a subgroup 
$\G $ of $(k^*)^2$ 
with $\G \neq \{ e \} $ 
such that 
$\psi ^{-1}\circ G\circ \psi 
=\{ \delta _{\ba }\mid \ba \in \G \} $. 
If $\psi $ belongs to $\Aut _R\Rx $, 
then we are done. 
Note that 
$$
(\psi \circ \sigma )^{-1}\circ G
\circ (\psi \circ \sigma )
=\{ \delta _{\ba }\mid \ba \in \G \} 
$$ 
holds for each $\sigma \in C_{\G }(K)$. 
Our strategy is to find $\sigma \in C_{\G }(K)$ 
such that $\psi \circ \sigma $ 
belongs 
to $\Aut _R\Rx $. 
There exist $a_1,a_2\in K^*$ for which 
$a_1f_1$ and $a_2f_2$ belong to $\Rx $. 
Since $\sigma _0:=(a_1x_1,a_2x_2)$ 
belongs to $C_{\G }(K)$, 
by replacing $\psi $ 
with $\psi \circ \sigma _0=(a_1f_1,a_2f_2)$, 
we may assume that $f_1$ and $f_2$ belong to $\Rx $. 
Then, 
$\det J\psi $ belongs to $\Rx \cap K^*=R\sm \zs $. 
Since $R$ is a PID, 
we may write 
$\det J\psi =\alpha p_1\cdots p_m$, 
where $\alpha \in R^*$ and $p_1,\ldots ,p_m$ 
are prime elements of $R$. 
Choose $\psi $ so that $m$ is minimal. 
Then, 
$f_1$ and $f_2$ do not belong to $p\Rx $ 
for any prime element $p$ of $R$. 
If $m=0$, 
then the proof is completed. 
Indeed, 
Keller's theorem says that, 
if $\psi \in \Aut _K\Kx $ satisfies 
$\psi (\Rx )\subset \Rx $ and $\det J\psi \in R^*$, 
then $\psi $ restricts to an element of $\Aut _R\Rx $ 
(cf.~\cite[Cor.~1.1.35]{Essen}). 
We show that $m=0$ by contradiction. 
Suppose that $m\geq 1$, 
and put $p:=p_m$. 
For $i=1,2$, 
set $g_i':=\psi ^{-1}(x_i)\in \Kx $, 
and take $b_i\in K^*$ 
so that $g_i:=b_ig_i'$ 
belongs to $\Rx \sm p\Rx $. 
Then, 
$b_ix_i=b_i\psi (g_i')=\psi (g_i)$ 
belongs to $\Rx $, 
since $\psi (\Rx )\subset \Rx $. 
Hence, 
$b_i$ is an element of $R\sm \zs $. 
Since $g_1$ and $g_2$ belong to $\Rx $, 
we have 
$b_1b_2\det J\psi ^{-1}=\det J(g_1,g_2)\in \Rx $. 
Hence, 
$b_1b_2=\det J(g_1,g_2)\cdot \det J\psi $ 
is divisible by $\det J\psi $, 
and thus by $p$. 
Therefore, 
$b_1$ or $b_2$ belongs to $pR$. 
In the following, 
we assume that $b_1$ belongs to $pR$.

Since $R$ is a PID over $k$, 
we see that $\kappa :=R/pR$ 
is an extension field of $k$. 
Consider the endomorphism 
$\bar{\psi }=(\bar{f}_1,\bar{f}_2)$ 
of the $\kappa $-algebra $\kappa [\x ]$. 
Since $f_1$ and $f_2$ do not belong to $p\Rx $ 
by assumption, 
$\bar{f}_1$ and $\bar{f}_2$ are nonzero. 
Since $\G \neq \{ e\} $, 
there exist 
$\phi \in G$, 
$i\in \{ 1,2\} $ 
and $\alpha \in k^*\sm \{ 1\} $ 
such that $\phi (f_i)=\alpha f_i$. 
Then, 
we have 
$\bar {\phi }(\bar{f}_i)
=(\phi (f_i))^-
=(\alpha f_i)^-
=\alpha \bar{f}_i\neq \bar{f}_i$. 
Hence, 
$\bar{f}_i$ does not belong to $\kappa $. 
This implies that the prime ideal 
$\ker \bar{\psi }$ of $\kappa [\x ]$ is not maximal, 
and hence is of height at most one. 
On the other hand, 
$\bar{g}_1$ is nonzero by definition, 
and satisfies $\bar{\psi }(\bar{g}_1)
=(\psi (g_1))^-=(b_1x_1)^-=0$. 
Hence, 
the height of $\ker \bar{\psi }$ is equal to one. 
Therefore, 
there exists a prime element $q$ of $\kappa [\x ]$ 
such that $\ker \bar{\psi }=q\kappa [\x ]$ 
(cf.~\cite[Thm.\ 20.1]{Matsumura}).

Since $g_1$ is an element of $\Rx $ 
which is a coordinate of $\Kx $, 
we may find a coordinate $h$ of $\kappa [\x ]$ 
such that $\bar{g}_1$ belongs to $\kappa [h]$ 
by the following lemma. 
This lemma is a consequence of 
Sathaye~\cite[Thm.\ 3]{Sathaye}.

\begin{lem}\label{lem:rat coord}
Assume that $n=2$, 
and let $R$, $p$ and $\kappa $ be as above. 
If $f\in \Rx $ is a coordinate of $\Kx $, 
then 
there exists a coordinate $h$ of $\kapx $ 
such that $\bar{f}$ belongs to $\kappa [h]$. 
\end{lem}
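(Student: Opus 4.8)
The plan is to reduce the lemma to the case where $R$ is a discrete valuation ring and then quote Sathaye's theorem; essentially all of the substance is imported from there. First I would localize: since $R$ is a PID and $p$ a prime element, $A:=R_{pR}$ is a discrete valuation ring with fraction field $K$, maximal ideal $pA$, and residue field $A/pA$ canonically isomorphic to $R/pR=\kappa$, and under this isomorphism the image of $f$ in $(A/pA)[x_1,x_2]$ corresponds to $\bar{f}\in\kapx$. As $f\in\Rx\subseteq A[x_1,x_2]$ is still a coordinate of $\Kx$, I may replace $R$ by $A$ and assume from the start that $R$ is a discrete valuation ring with uniformizer $p$.

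Next I would clear away the degenerate cases. If $f\in p\Rx$, then $\bar{f}=0\in\kappa[x_1]$ and there is nothing to prove, so assume $f\notin p\Rx$, i.e.\ $\bar{f}\neq 0$; then $\bar{f}$ is a nonzerodivisor in $\kapx$, hence $f$ is a nonzerodivisor in $\Rx$ and $\Rx/f\Rx$ is flat over $R$. If moreover $\bar{f}\in\kappa$, then again $\bar{f}\in\kappa[x_1]$ and we are done, so assume $\bar{f}$ is nonconstant. Recall also that $f$, being a coordinate of $\Kx$, is nonconstant and that $\Kx/f\Kx$ is a polynomial ring in one variable over $K$.

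With these reductions in place, the situation is exactly the one governed by Sathaye~\cite[Thm.\ 3]{Sathaye}: a discrete valuation ring $R$, an element $f\in\Rx$ whose generic fibre $\Kx/f\Kx$ is a polynomial ring in one variable over $K$ (indeed $f$ is a coordinate of $\Kx$) and whose special fibre is nonzero and $R$-flat. Sathaye's theorem describes the special equation up to a $\kappa$-automorphism of $\kapx$ and a nonzero constant, and concretely produces a coordinate $h$ of $\kapx$ with $\bar{f}\in\kappa[h]$. Note that $\bar{f}$ itself need not be a coordinate of $\kapx$: for $f=x_1^2+px_2$, which is a coordinate of $\Kx$, one has $\bar{f}=x_1^2\in\kappa[x_1]$, so the membership formulation $\bar{f}\in\kappa[h]$ is genuinely the correct one.

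The main obstacle in executing this is not conceptual but a matter of matching the present setup precisely to the hypotheses and conclusion of Sathaye~\cite[Thm.\ 3]{Sathaye}: one must verify that the finiteness and flatness conditions it requires are automatic here once $\bar{f}\neq 0$, and that its conclusion about the reduced equation is literally the statement that $\bar{f}$ lies in $\kappa[h]$ for some coordinate $h$ of $\kapx$. Once that bookkeeping is settled, the proof is complete.
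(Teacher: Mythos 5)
Your overall route --- localize at $p$ so that $R':=R_{(p)}$ is a rank-one discrete valuation ring with residue field $\kappa $, then settle everything by Sathaye~\cite[Thm.\ 3]{Sathaye} --- is the same as the paper's. However, the step you defer as ``bookkeeping'' is in fact the whole mathematical content of the proof, and the version of Sathaye's theorem you invoke is not what that theorem says. Theorem~3 of \cite{Sathaye} is not a statement about a single element $f$ whose generic fibre $\Kx /f\Kx $ is a polynomial ring in one variable and whose special fibre is nonzero and flat, with conclusion ``$\bar{f}\in \kappa [h]$ for some coordinate $h$''; if it were, the lemma would be a literal restatement of it and there would be nothing to match. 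It is a statement about a \emph{pair}: for an affine domain $A$ over the DVR $R'$ whose generic and special fibres are polynomial rings in two variables, and elements $u,v\in A$ with $A\otimes _{R'}K=K[u,v]$, it describes the $\kappa $-subalgebra $B$ of $A\otimes _{R'}\kappa $ generated by the residues of $u$ and $v$: if $B$ has transcendence degree two over $\kappa $ then $B$ is the whole special fibre, and if it has transcendence degree one then $B\subset \kappa [x]$ for some coordinate system $x,y$ of the special fibre.

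To complete your argument you must therefore introduce the second generator: choose $g\in \Kx $ with $K[f,g]=\Kx $, scale it by an element of $R\setminus \{ 0\} $ so that $g\in \Rx $, and apply Theorem~3 to $A:=R'[\x ]$ and $(u,v):=(f,g)$, with $B:=\kappa [\bar{f},\bar{g}]$. In the transcendence degree two case one gets $\kapx =\kappa [\bar{f},\bar{g}]$, so $h:=\bar{f}$ works; in the transcendence degree one case one gets $\kapx =\kappa [x,y]$ with $\bar{f}\in B\subset \kappa [x]$, so $h:=x$ works; and if $\bar{f}$ is algebraic over $\kappa $ (your degenerate cases, including $\bar{f}=0$) the statement is trivial. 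This is exactly how the paper argues. Note also that the flatness of $\Rx /f\Rx $ over $R$ that you verify plays no role: the hypotheses of Theorem~3 are automatic here because $A=R'[\x ]$. Your phrasing (flat fibres, ``the special equation up to automorphism and constant'') suggests you have in mind Sathaye's fibration criterion \cite[Thm.\ 1]{Sathaye}, which requires equicharacteristic zero and is deliberately avoided by the paper, since the lemma must hold in arbitrary characteristic.
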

\begin{proof}
By assumption, 
there exists $g\in \Rx $ such that $K[f,g]=K[\x ]$. 
Since $R':=R_{(p)}$ 
is a rank-one discrete valuation ring 
with residue field $\kappa $, 
we obtain the lemma using 
the following 
result of Sathaye~\cite[Thm.\ 3]{Sathaye} 
for $A:=R'[\x ]$ and $(u,v):=(f,g)$: 
Let $R'$ be a rank-one discrete valuation ring 
with residue field $\kappa $ 
and field of fractions $K$, 
and let $A$ be an affine domain over $R'$ 
such that 
$A_0:=A\otimes _{R'}K$ 
and $A_1:=A\otimes _{R'}\kappa $ 
are polynomial rings in two variables over 
$K$ and $\kappa $, 
respectively. 
Take $u,v\in A$ such that $A_0=K[u,v]$, 
and let $B$ be the $\kappa $-subalgebra of $A_1$ 
generated by the images of $u$ and $v$ in $A_1$. 
If $B$ has 
transcendence degree two over $\kappa $, 
then we have $A_1=B$. 
If $B$ has 
transcendence degree one over $\kappa $, 
then there exist $x,y\in A_1$ such that 
$A_1=\kappa [x,y]$ 
and $B\subset \kappa [x]$. 
\end{proof}

We remark that 
$R'$ is not assumed to be of 
``equicharacteristic zero" 
in the above result of Sathaye, 
unlike his famous theorem \cite[Thm.~1]{Sathaye}. 
When $R$ contains $\Q $, 
Lemma~\ref{lem:rat coord} 
is also proved by using Rentschler~\cite{Rentschler} 
instead of Sathaye~\cite[Thm.\ 3]{Sathaye} 
(see the proof at the end of this section).

Now, 
write $\bar{g}_1=\Phi (h)$, 
where $\Phi (x_1)\in \kappa [x_1]\sm \zs $. 
Then, 
we have $\Phi (\bar{\psi }(h))=\bar{\psi }(\bar{g}_1)=0$. 
Hence, 
$\bar{\psi }(h)$ is algebraic over $\kappa $. 
Since $\bar{\psi }(h)$ is an element of $\kappa [\x ]$, 
it follows that $\bar{\psi }(h)$ belongs to $\kappa $. 
Thus, 
by replacing $h$ with $h-\bar{\psi }(h)$, 
we may assume that $\bar{\psi }(h)=0$. 
Then, 
$h$ belongs to $\ker \bar{\psi }=q\kappa [\x ]$. 
Since $h$ is a coordinate of $\kappa [\x ]$, 
and hence irreducible, 
it follows that $h=cq$ 
for some $c\in \kappa ^*$. 
Therefore, 
we have $\ker \bar{\psi }=h\kappa [\x ]$.

We show that $h$ is $\G $-homogeneous. 
Write $h=\sum _{\gamma \in \Gamma _{\G }}h_{\gamma }$, 
where $h_{\gamma }\in \kappa [\x ]_{\gamma }$. 
Then, 
for each $\gamma \in \Gamma _{\G }$, 
we know by Lemma~\ref{lem:kspan} that 
$h_{\gamma }$ 
is a $k$-linear combination of 
$\delta _{\ba }(h)$ for $\ba \in \G $. 
For each $\ba \in \G $, 
we have 
$\phi _{\ba }:=\psi \circ \delta _{\ba }\circ \psi ^{-1}\in G$. 
Since $\phi _{\ba }$ is an element of $\Aut _R\Rx $ 
satisfying 
$\bar{\psi }\circ \bar{\delta }_{\ba }
=\bar{\phi }_{\ba }\circ \bar{\psi }$, 
we get 
$\bar{\psi }(\bar{\delta }_{\ba }(h))
=\bar{\phi }_{\ba }(\bar{\psi }(h))
=\bar{\phi }_{\ba }(0)=0$. 
Hence, 
$\delta _{\ba }(h)$ belongs to $\ker \bar{\psi }$ 
for each $\ba \in \G $. 
Thus, 
$h_{\gamma }$ belongs to $\ker \bar{\psi }=h\kapx $ 
for each $\gamma \in \Gamma _{\G }$. 
On the other hand, 
since no common monomials appear in 
$h_{\gamma }$ and $h_{\gamma '}$ if $\gamma \neq \gamma '$, 
we have $\deg h_{\gamma }\leq \deg h$ 
for each $\gamma \in \Gamma _{\G }$. 
Hence, 
$h=h_{\gamma }$ holds for some 
$\gamma \in \Gamma _{\G }$. 
Therefore, 
$h$ is $\G $-homogeneous. 
By Proposition~\ref{thm:homo autom}, 
there exist $\sigma _1=(h_1,h_2)\in C_{\G }(R)$, 
$a\in \kappa ^*$ and $i\in \{ 1,2\} $ 
such that $\det J\sigma _1=1$ and $a\bar{h}_i=h$. 
Then, 
$\psi (h_1)$ and $\psi (h_2)$ belong to $\Rx $. 
Moreover, 
$\psi (h_i)$ belongs to $p\Rx $, 
since $(\psi (h_i))^-
=\bar{\psi }(\bar{h}_i)=\bar{\psi }(a^{-1}h)=0$. 
Define $\sigma _2\in C_{\G }(K)$ 
by $\sigma _2(x_i)=p^{-1}h_i$ 
and $\sigma _2(x_j)=h_j$ for $j\neq i$. 
Then, 
$(\psi \circ \sigma _2)(x_i)=p^{-1}\psi (h_i)$ 
and 
$(\psi \circ \sigma _2)(x_j)=\psi (h_j)$ 
both belong to $\Rx $, 
and 
$$
\det J(\psi \circ \sigma _2)=\det J\psi \cdot \det J\sigma _2
=p^{-1}\det J\psi \cdot \det J\sigma _1 
=\alpha p_1\cdots p_{m-1}. 
$$
This contradicts the minimality of $m$, 
completing 
the proof of the case (1) of Theorem~\ref{thm:PID}~(i).

\medskip

Remark: 
The outline of the proof above 
is similar to the proof of \cite[Thm.\ 3.2]{KR}, 
but more precise treatments, 
such as Proposition~\ref{thm:homo autom}, 
are necessary when $k$ is not algebraically closed. 
In addition, 
the proof of \cite[Thm.\ 3.2]{KR} 
uses Sathaye~\cite[Thm.\ 1]{Sathaye} 
in a crucial step, 
which requires that $\ch k=0$.

\medskip

Finally, 
we give another proof of 
Lemma~\ref{lem:rat coord}
in the special case where $R$ contains $\Q $. 
Consider the $K$-derivation 
$D:\Kx \ni q\mapsto \det J(f,q)\in \Kx $. 
Let $g\in \Kx $ be such that $K[f,g]=\Kx $. 
Then, 
we have $D(f)=0$ 
and $D(g)=\det J(f,g)\in K^*$. 
Hence, 
$D$ is {\it locally nilpotent}, 
i.e., 
for each $q\in \Kx $, 
there exists $m\geq 1$ such that $D^m(q)=0$. 
Take $a\in K^*$ such that 
$aD(x_i)$ belongs to $\Rx $ for $i=1,2$. 
Then, 
$D_1:=aD$ restricts to an 
$R$-derivation of $\Rx $. 
We may choose $a$ so that $aD(x_1)$ or $aD(x_2)$ 
does not belong to $p\Rx $. 
Then, 
the 
$\kappa $-derivation 
$\bar{D}:=\id _{\kappa }\otimes _RD_1$ 
of $\kappa [\x ]$ is nonzero and locally nilpotent. 
Since $D_1(f)=0$, 
we have $\bar{D}(\bar{f})=0$. 
Hence, 
$\bar{f}$ belongs to $\ker \bar{D}$. 
On the other hand, 
there exists a coordinate $h$ of $\kappa [\x ]$ 
such that $\ker \bar{D}=\kappa [h]$ 
by Rentschler~\cite{Rentschler} 
(see also~\cite[Thm.\ 1.3.48]{Essen}), 
since the field $\kappa $ contains $\Q $. 
This proves Lemma~\ref{lem:rat coord}.

\section{Residual variables}\label{sect:Gauss}
\setcounter{equation}{0}

Throughout, 
let $R$ be a $k$-domain unless otherwise stated, 
and $G$ a subgroup of $\Aut _R\Rx $ 
such that $G_{(0)}$ is diagonalizable. 
Take 
$\psi =(f_1,\ldots ,f_n)\in \Aut _K\Kx $ 
and a subgroup $\G $ of $(k^*)^n$ with 
$\psi ^{-1}\circ G\circ \psi 
=\{ \delta _{\ba }\mid \ba \in \G \} $. 
We define an algebraic $\G $-action 
$V=(V_{\gamma })_{\gamma \in \Gamma _{\G }}$ 
on $\Rx $ as in (\ref{eq:intersection}).

Let $\p $ be a prime ideal of $R $. 
We say that $G$ {\it degenerates} at $\p $ 
if there exists $\gamma \in \Gamma _{\G }$ 
such that $V_{\gamma }\neq \zs $ 
and $\kappa (\p )\otimes _RV_{\gamma }=\zs $. 
If this is the case, 
$G$ degenerates at any prime ideal of $R$ 
containing $\p $.

The goal of this section 
is to prove the following theorem.

\begin{thm}\label{thm:non-degen}
Assume that $n\geq 2$, 
$R$ is a noetherian UFD over $k$, 
and $G$ does not degenerate at 
any maximal ideal of $R$. 
Then, 
Problem~$\ref{prob:main2}$ 
has an affirmative answer 
if at least $n-1$ of 
$\Gamma _{\G }^{(1)},\ldots ,\Gamma _{\G }^{(n)}$ 
are not equal to $\Gamma _{\G }$. 
\end{thm}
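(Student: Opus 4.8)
The plan is to reduce the global statement to a local one at each maximal ideal, and then glue. Fix $\psi=(f_1,\ldots,f_n)$ and $\G$ as in the statement, with the associated algebraic $\G$-action $V$ on $\Rx$. By reindexing the variables, I may assume that $\Gamma_{\G}^{(n)}\ne\Gamma_{\G}$ is allowed to be the possible exception, so that $\Gamma_{\G}^{(i)}\ne\Gamma_{\G}$ holds at least for $i=1,\ldots,n-1$. The first step is to normalise $\psi$: after composing with a suitable diagonal $\sigma_0\in C_{\G}(K)$ (i.e.\ multiplying each $f_i$ by an element of $K^*$), I may assume each $f_i\in\Rx$ and, moreover, that $f_i\notin p\Rx$ for every prime element $p$ of $R$, by choosing $\psi$ so that the number of prime factors of $\det J\psi\in R\setminus\{0\}$ is minimal — exactly as in Section~\ref{sect:Proof}. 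The goal is then to show $\det J\psi\in R^*$, after which Keller's theorem (cf.~\cite[Cor.~1.1.35]{Essen}) finishes the argument.

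The heart of the matter is to show that the minimal $m$ is $0$. Suppose $m\ge 1$ and set $p:=p_m$, $\kappa:=R/pR$, which is a field since $R$ is a UFD (so $p$ is prime). Since $G$ does not degenerate at the maximal ideals of $R$ — hence at no prime — each $V_{\gamma}$ with $V_{\gamma}\ne\{0\}$ survives modulo $p$; I expect to use this to control the reduction $\bar\psi$ and, as in Section~\ref{sect:Proof}, to locate a $\G$-homogeneous coordinate-type element in $\kapx$ related to $\ker\bar\psi$. The crucial new input, replacing Proposition~\ref{thm:homo autom}, should be the hypothesis that $n-1$ of the $\Gamma_{\G}^{(i)}$ are proper subgroups of $\Gamma_{\G}$: for those indices $i$, $\gamma_i$ is not in the subgroup generated by the remaining $\gamma_j$, which forces the variable $x_i$ — hence $f_i$ up to the $\G$-action — to occur in a rather rigid way in any $\G$-homogeneous expression. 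I would use this to produce a $\G$-homogeneous $\sigma\in C_{\G}(R)$ (via Lemma~\ref{lem:homoge autom} over the appropriate residue ring, then a lift as in the discussion before Lemma~\ref{lem:her}) such that $\psi\circ\sigma$, after dividing the $i$-th coordinate by $p$, still maps $\Rx$ into $\Rx$; this lowers $m$ by one, contradicting minimality. Concretely I expect the combinatorial condition to guarantee that $f_1,\ldots,f_{n-1}$ (or a $\G$-homogeneous modification thereof) form a partial system of coordinates of $\Rx$, so that Lemma~\ref{lem:partial system} applies directly once we are over $R$ itself.

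Thus the skeleton is: (1) normalise $\psi$ and reduce to proving $\det J\psi\in R^*$; (2) assume a prime $p\mid\det J\psi$ and pass to $\kappa=R/pR$; (3) exploit non-degeneracy to keep track of the $V_{\gamma}$ modulo $p$; (4) use the hypothesis on the $\Gamma_{\G}^{(i)}$ to find a $\G$-homogeneous $\sigma\in C_{\G}(R)$ correcting one prime factor, contradicting minimality; (5) conclude by Keller and, for the partial-coordinate upgrade, by Lemma~\ref{lem:partial system}. The main obstacle I anticipate is step~(4): translating the group-theoretic condition ``at least $n-1$ of the $\Gamma_{\G}^{(i)}$ are proper'' into the concrete statement that the relevant $f_i$ can be arranged into a $\G$-homogeneous partial system of coordinates over $\Rx$ (not merely over $\Kx$), and ensuring the lifting of elementary $\G$-homogeneous automorphisms from $\kapx$ to $\Rx$ interacts correctly with the division by $p$. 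Getting the bookkeeping of which coordinate is divisible by $p$ to match the ``special'' index allowed by the hypothesis is where the argument is delicate.
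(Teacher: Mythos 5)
Your plan correctly anticipates the endgame --- reduce to showing that $f_1,\ldots ,f_{n-1}$ form a partial system of coordinates of $\Rx $ and then invoke Lemma~\ref{lem:partial system} --- but the mechanism you propose for getting there (step (4)) is exactly the missing content, and the route you sketch cannot work in this generality. First, a concrete error: for a general noetherian UFD $R$ the quotient $R/pR$ by a prime element is only a domain, not a field (e.g.\ $R=k[t_1,t_2]$, $p=t_1$), so the whole reduction ``pass to $\kappa =R/pR$ and work with coordinates of $\kapx $'' collapses at the start. Second, every tool that makes the Section~\ref{sect:Proof} descent work is intrinsically two-variable: Lemma~\ref{lem:homoge autom} and Proposition~\ref{thm:homo autom} (which rest on Lemma~\ref{lem:her}, i.e.\ the van der Kulk--Jung structure of $\Aut _{\kappa }\kappa [x_1,x_2]$), Lemma~\ref{lem:rat coord} (Sathaye over a DVR), and the fact that $\ker \bar{\psi }$ is principal and generated by a coordinate. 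None of these has an analogue for $n\geq 3$; indeed the absence of such structure theory is why the linearization problem is hard in higher dimension, and the hypothesis ``at least $n-1$ of the $\Gamma _{\G }^{(i)}$ are proper'' does not by itself restore it. Your plan also never makes essential use of the hypothesis of Problem~\ref{prob:main2} that $G_{\p }$ is diagonalizable at \emph{every} prime, which for $n\geq 3$ is not automatic and is in fact the engine of the actual proof.

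The paper proves the theorem by a different (and genuinely non-descent) argument. Using Lemma~\ref{lem:standard grading} and the primitivity of the $f_i$ (a Gauss-lemma argument in the UFD $R$), one gets the structural identity $V_{\gamma }=V_{\lambda }f_1^{i_1}\cdots f_s^{i_s}$ of (\ref{eq:Gauss}), which base-changes to any $R$-algebra $S$. Proposition~\ref{prop:specialization} then shows: if $V_S$ is diagonalizable and no $V_{\gamma }\neq \zs $ dies in $S$ (this is where non-degeneration enters, and non-degeneration at maximal ideals implies it at all primes), then the images of $f_1,\ldots ,f_s$ form a partial system of coordinates of $\Sx $; the hypothesis on the $\Gamma _{\G }^{(i)}$ is used only to guarantee $t_i\neq 1$ for $i\neq n$, i.e.\ $s\geq n-1$. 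Applying this with $S=\kappa (\p )$ for every prime $\p $ (here the fiberwise diagonalizability hypothesis is used), the images of $f_1,\ldots ,f_{n-1}$ are partial coordinate systems in every fiber, and the globalization is done not by a Keller-type minimality argument but by the residual-variables theorem of Bhatwadekar--Dutta \cite[Remark 3.4]{BD}, valid since a noetherian UFD is seminormal; Lemma~\ref{lem:partial system} then concludes. So the two genuinely new ingredients you would need to supply --- the fiberwise statement of Proposition~\ref{prop:specialization} and the passage from fiberwise to global coordinates via \cite{BD} --- are absent from your proposal, and the lifting-of-elementary-automorphisms device you hope to substitute for them is not available once $n\geq 3$ or $R/pR$ fails to be a field.
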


By the following proposition 
and the remark after Problem~\ref{prob:main}, 
Theorem~\ref{thm:non-degen} implies 
the case (2) of (i), 
and (ii) of Theorem~\ref{thm:main}.

\begin{prop}\label{prop:non-degenerate}
If $R$ is a regular $k$-domain, 
then $G$ does not degenerate 
at any prime ideal of $R$. 
\end{prop}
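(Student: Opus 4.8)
The plan is to isolate a single graded piece of the $\G $-action attached to $G$ and to deduce the statement from projectivity together with a freeness result for modules over local rings. Recall that the algebraic $\G $-action $V=(V_{\gamma })_{\gamma \in \Gamma _{\G }}$ defined from $G$ as in (\ref{eq:intersection}) satisfies $\Rx =\bigoplus _{\gamma \in \Gamma _{\G }}V_{\gamma }$ as $R$-modules. Since $\Rx $ is a free $R$-module, having the monomials as a basis, each $V_{\gamma }$ is a direct summand of a free module and hence a projective $R$-module. Moreover, $V_{\gamma }$ is a submodule of $\Rx \subset \Kx $, so it is torsion-free over $R$; in particular, $V_{\gamma }\neq \zs $ implies $K\otimes _RV_{\gamma }\neq \zs $.

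Next I would fix a prime ideal $\p $ of $R$ and $\gamma \in \Gamma _{\G }$ with $V_{\gamma }\neq \zs $, and show that $\kappa (\p )\otimes _RV_{\gamma }\neq \zs $; this is exactly the negation of the degeneracy condition at $\p $. Localizing at $\p $, the module $(V_{\gamma })_{\p }:=R_{\p }\otimes _RV_{\gamma }$ is projective over $R_{\p }$, and it is nonzero because $(V_{\gamma })_{\p }\otimes _{R_{\p }}K=K\otimes _RV_{\gamma }\neq \zs $. Since $R$ is regular, $R_{\p }$ is a regular local ring; being local, all of its projective modules are free by Kaplansky's theorem. Thus $(V_{\gamma })_{\p }$ is a nonzero free $R_{\p }$-module, so its reduction modulo the maximal ideal $\p R_{\p }$ is nonzero, and since $\kappa (\p )\otimes _RV_{\gamma }\cong \kappa (\p )\otimes _{R_{\p }}(V_{\gamma })_{\p }$, we conclude $\kappa (\p )\otimes _RV_{\gamma }\neq \zs $. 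Hence $G$ does not degenerate at $\p $, and as $\p $ was arbitrary, the proposition follows.

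The argument is short, and the only point requiring care is that $V_{\gamma }$ is in general not finitely generated over $R$, which is why I invoke Kaplansky's theorem (projective modules over a local ring are free) rather than simply Nakayama's lemma; when $V_{\gamma }$ happens to be finitely generated, Nakayama already suffices. Beyond bookkeeping the base-change identifications, I do not expect any genuine obstacle.
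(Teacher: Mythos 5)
Your proof is correct, but it takes a genuinely different route from the paper's. The paper localizes at $\p $ and argues by induction on $\dim R_{\p }$ using a regular system of parameters: given $f\in V_{\gamma }\sm \zs $, the UFD property of the regular local ring lets one replace $f$ by $a_1^{-i}f\in V_{\gamma }\sm a_1\Rx $ (this uses that $V_{\gamma }=\psi (\Kx _{\gamma })\cap \Rx $ is stable under division by elements of $R$), and then pass to $R/a_1R$, which is again regular local of smaller dimension; regularity is essential to that induction. You instead exploit the decomposition $\Rx =\bigoplus _{\gamma }V_{\gamma }$ established in Section~\ref{sect:action}: each $V_{\gamma }$ is an $R$-module direct summand of the free module $\Rx $, hence projective, so $R_{\p }\otimes _RV_{\gamma }$ is projective over the local ring $R_{\p }$ and therefore free by Kaplansky's theorem; torsion-freeness makes this localization nonzero, and a nonzero free module has nonzero fiber at the maximal ideal. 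Two remarks. First, your argument never actually uses regularity (only that $R_{\p }$ is local), so it proves the stronger statement that $G$ does not degenerate at any prime ideal of an arbitrary $k$-domain $R$; in particular, the non-degeneration hypothesis of Theorem~\ref{thm:non-degen} would be automatic, and the regularity assumptions in the case (2) of (i) and in (ii) of Theorem~\ref{thm:main} could correspondingly be relaxed, which is more than the paper claims. Second, the price is the appeal to Kaplansky's theorem for possibly non-finitely-generated projective modules (a point you correctly flag, since $V_{\gamma }$ defined by (\ref{eq:intersection}) need not be finitely generated), whereas the paper's induction stays within elementary facts about regular local rings. Both proofs are valid; yours is shorter and more general, the paper's is more self-contained.
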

\begin{proof}
Take any prime ideal $\p $ of $R$ 
and $\gamma \in \Gamma _{\G }$. 
We show that 
$V_{\gamma }\neq \zs $ implies 
$\kappa (\p )\otimes _RV_{\gamma }\neq \zs $. 
Since $R$ is a domain, 
$R_{\p }\otimes _RV_{\gamma }$ 
contains $V_{\gamma }$. 
Hence, 
$V_{\gamma }\neq \zs $ implies 
$R_{\p }\otimes _RV_{\gamma }\neq \zs $. 
Thus, 
by replacing $R$ 
and $\p $ with $R_{\p }$ and $\p R_{\p }$, 
respectively, 
we may assume that $R$ is a regular local ring 
with maximal ideal $\p $. 
We show that 
$(R/\p )\otimes _RV_{\gamma }\neq \zs $ 
by induction on $r:=\dim R$. 
The assertion is clear if $r=0$. 
Assume that $r\geq 1$, 
and let $a_1,\ldots ,a_r\in \p $ 
be a regular system of parameters of $R$. 
Take any $f\in V_{\gamma }\sm \zs $. 
Since a regular local ring is a UFD 
(cf.~\cite[Thm.\ 20.3]{Matsumura}), 
there exists $i\geq 0$ 
such that $f_1:=a_1^{-i}f$ 
belongs to $\Rx \sm a_1\Rx $. 
Set $R_1:=R/a_1R$. 
Then, 
the image of $f_1$ in $R_1[\x ]$ 
is nonzero, 
and belongs to $R_1\otimes _RV_{\gamma }$. 
Note that $R_1$ 
is an $(r-1)$-dimensional 
regular local ring, 
and the maximal ideal $\p _1$ of $R_1$ 
is generated by the images of $a_2,\ldots ,a_r$ 
(cf.~\cite[Thm.\ 14.2]{Matsumura}). 
Hence, 
by induction assumption, 
we get 
$(R_1/\p _1)\otimes _RV_{\gamma }\neq \zs $. 
Since $R_1/\p _1\simeq R/\p $, 
it follows that $(R/\p )\otimes _RV_{\gamma }\neq \zs $. 
\end{proof}

The rest of this section 
is devoted to the proof of 
Theorem~\ref{thm:non-degen}. 
First, 
we investigate the structure of 
the algebraic $\G $-action 
$(\Kx _{\gamma })_{\gamma \in \Gamma _{\G }}$ 
on $\Kx $. 
For $i=1,\ldots ,n$, 
let $t_i$ be the minimal integer $t\geq 1$ 
with $t\gamma _i\in \Gamma _{\G }^{(i)}$, 
where $t_i:=\infty $ if 
$\Z \gamma _i\cap \Gamma _{\G }^{(i)}=\zs $. 
Note that $t_i=1$ 
if and only if $\Gamma _{\G }^{(i)}=\Gamma _{\G }$. 
Without loss of generality, 
we may assume that 
$t_i=\infty $ if $1\leq i\leq r$, 
$2\leq t_i<\infty $ if $r<i\leq s$ 
and $t_i=1$ if $s<i\leq n$ 
for some $0\leq r\leq s\leq n$. 
Set 
$\Lambda :=\sum _{i=r+1}^n\Z \gamma _i$. 
Then, 
we have 
$\Gamma _{\G }=\Lambda \oplus \bigoplus _{i=1}^r\Z \gamma _i$. 
Moreover, 
$t_i\gamma _i$ belongs to 
$\bigcap _{j=r+1}^n\Lambda _j$ 
for each $r<i\leq n$, 
where 
$\Lambda _j$ 
is the subgroup of $\Lambda $ 
generated by $\gamma _l$ for $r<l\leq n$ 
with $l\neq j$. 
For each $1\leq i\leq n$, 
we set $T_i:=\Z $ if $t_i=\infty $, 
and $T_i:=\{ 0,\ldots ,t_i-1\} $ if $t_i\neq \infty $. 
Then, 
we have the following lemma.

\begin{lem}\label{lem:standard grading}
Each $\gamma \in \Gamma _{\G }$ 
is uniquely written as 
\begin{equation}\label{eq:unique expression}
\gamma =\sum _{l=1}^si_l\gamma _l+\lambda, 
\ \text{ where }\ i_l\in T_l\ 
\text{ for }\ l=1,\ldots ,s\ 
\text{ and }\ 
\lambda \in \bigcap _{i=r+1}^n\Lambda _i. 
\end{equation}
We have 
$\Kx _{\gamma }=\Kx _{\lambda }
x_1^{i_1}\cdots x_s^{i_s}$ 
if $i_1,\ldots ,i_r\geq 0$, 
and $\Kx _{\gamma }=\zs $ 
otherwise. 
\end{lem}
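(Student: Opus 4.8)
The plan is to prove both assertions simultaneously by unwinding the direct-sum decomposition of $\Gamma_{\G}$ recorded just before the lemma. First I would establish the uniqueness of the expression \eqref{eq:unique expression}. Recall that $\Gamma_{\G}=\Lambda\oplus\bigoplus_{i=1}^r\Z\gamma_i$, so every $\gamma$ is uniquely $\gamma=\gamma'+\sum_{i=1}^r i_i\gamma_i$ with $\gamma'\in\Lambda$ and $i_i\in\Z=T_i$; this handles the free part. For the remaining coordinates $r<l\leq s$, I would work inside $\Lambda=\sum_{i=r+1}^n\Z\gamma_i$ and use that $t_l$ is the order of $\gamma_l$ modulo $\Lambda_l$ (more precisely, modulo the appropriate subgroup), so that subtracting a suitable multiple $i_l\gamma_l$ with $i_l\in T_l=\{0,\dots,t_l-1\}$ lands us in $\bigcap_{i=r+1}^n\Lambda_i$ after finitely many reductions; the fact that $t_l\gamma_l\in\bigcap_{j=r+1}^n\Lambda_j$, already noted in the text, is exactly what makes this reduction consistent and the residue $\lambda$ well defined. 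Uniqueness then follows because any two expressions would differ by an element of $\bigl(\bigcap\Lambda_i\bigr)\cap\sum_{l=r+1}^s\Z\gamma_l$ together with a free part, which forces all $i_l$ to agree modulo $t_l$, hence to be equal since they lie in $T_l$.

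Next I would prove the monomial description of $\Kx_{\gamma}$. By definition $\Kx_{\gamma}$ is the $K$-span of the monomials $x_1^{a_1}\cdots x_n^{a_n}$ with $a_j\geq 0$ whose image in $\Gamma_{\G}$ equals $\gamma$; writing $a=(a_1,\dots,a_n)$, the image is $\sum_j a_j\gamma_j$. I would split off the variables $x_{s+1},\dots,x_n$: since $t_j=1$ for $s<j\leq n$, these $\gamma_j$ already lie in $\Gamma_{\G}^{(j)}$, and one checks they are absorbed into the $\Lambda$-part; concretely the exponents $a_{s+1},\dots,a_n$ are unconstrained beyond contributing to $\lambda$, so they range freely over $\Kx_{\lambda}$. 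For the remaining variables $x_1,\dots,x_s$, comparing $\sum_{j=1}^s a_j\gamma_j$ (modulo $\bigcap\Lambda_i$ and the free summands) against the unique expression \eqref{eq:unique expression} forces $a_j\equiv i_j\pmod{t_j}$ for $r<j\leq s$ and $a_j=i_j$ for $1\leq j\leq r$. When $i_1,\dots,i_r\geq 0$ this is satisfiable, and the minimal choice $a_j=i_j$ for $j\leq s$ shows $\Kx_{\gamma}=\Kx_{\lambda}x_1^{i_1}\cdots x_s^{i_s}$ (higher admissible exponents $a_j=i_j+t_j$ for $r<j\leq s$ only re-multiply by $x_j^{t_j}\in\Kx_{0}$-times a $\Lambda$-shift, hence are already captured inside $\Kx_{\lambda}x_1^{i_1}\cdots x_s^{i_s}$). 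When some $i_j<0$ for $j\leq r$, no nonnegative exponent tuple has image $\gamma$, so $\Kx_{\gamma}=\zs$.

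The main obstacle I anticipate is the bookkeeping around the subgroups $\Lambda_j$ and $\bigcap_{i=r+1}^n\Lambda_i$: one must verify carefully that the reduction of the $x_{r+1},\dots,x_s$-exponents modulo the $t_l$'s is well defined, i.e.\ independent of the order in which the reductions are performed, and that the leftover $\lambda$ genuinely lies in $\bigcap_{i=r+1}^n\Lambda_i$ rather than merely in $\Lambda$. This is precisely where the already-established relation ``$t_i\gamma_i\in\bigcap_{j=r+1}^n\Lambda_j$ for each $r<i\leq n$'' is indispensable, and I would invoke it explicitly at each step. Everything else is a routine comparison of monomial exponents against a direct-sum decomposition.
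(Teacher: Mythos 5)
Your proposal is correct and takes essentially the same route as the paper's proof: existence of (\ref{eq:unique expression}) by dividing each coefficient of $\gamma _l$ ($r<l\leq n$) by $t_l$ and using $t_l\gamma _l\in \bigcap _{j=r+1}^n\Lambda _j$, uniqueness from the minimality of $t_u$ together with the direct summand $\bigoplus _{i=1}^r\Z \gamma _i$, and the description of $\Kx _{\gamma }$ by applying this reduction to the exponent vector of each monomial, the leftover factor $\prod _{l>r}x_l^{q_lt_l}$ lying in $\Kx _{\lambda }$. The only slip is the phrase ``$x_j^{t_j}\in \Kx _0$'', which should read $x_j^{t_j}\in \Kx _{t_j\gamma _j}$ with $t_j\gamma _j\in \bigcap _{i=r+1}^n\Lambda _i$ --- which is exactly what your own bookkeeping actually uses.
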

\begin{proof}
There exist $j_1,\ldots ,j_n\in \Z $ 
such that $\sum _{l=1}^nj_l\gamma _l=\gamma $. 
For each $r<l\leq n$, 
let $q_l$ and $r_l$ be the quotient 
and remainder of $j_l$ divided by $t_l$, 
respectively. 
Then, 
$q_lt_l\gamma _l$ 
belongs to $\bigcap _{i=r+1}^n\Lambda _i$. 
Since $r_l=0$ if $s<l\leq n$, 
we obtain (\ref{eq:unique expression}) 
by setting 
$i_l:=j_l$ for $1\leq l\leq r$, 
$i_l:=r_l$ for $r<l\leq s$, 
and $\lambda :=\sum _{l=r+1}^nq_lt_l\gamma _l$. 
If $\gamma =\sum _{l=1}^si_l'\gamma _l+\lambda '$ 
is another expression, 
then we have 
$$(i_u-i_u')\gamma _u
=\sum _{l\neq u}(i_l'-i_l)\gamma _l+\lambda '-\lambda 
\in \Gamma _{\G }^{(u)}
$$ 
for each $1\leq u\leq s$. 
Since $i_u$ and $i_u'$ belong to $T_u$, 
it follows that $i_u=i_u'$ 
by the definition of $t_u$. 
This proves the uniqueness.

Clearly, 
$\Kx _{\gamma }$ contains 
$\Kx _{\lambda }
x_1^{i_1}\cdots x_s^{i_s}$ 
if $i_1,\ldots ,i_r\geq 0$. 
Hence, 
it suffices to check that 
$\Kx _{\gamma }\neq \zs $ implies 
$i_1,\ldots ,i_r\geq 0$ 
and $\Kx _{\gamma }\subset 
\Kx _{\lambda }
x_1^{i_1}\cdots x_s^{i_s}$. 
Assume that 
$x_1^{j_1}\cdots x_n^{j_n}$ belongs to $\Kx _{\gamma }$ 
for some $j_1,\ldots ,j_n\geq 0$. 
Then, 
we have $\sum _{l=1}^nj_l\gamma _l=\gamma $. 
This implies that $i_l=j_l$ for $1\leq l\leq r$ 
by the discussion above. 
Since $j_l\geq 0$, 
we get $i_l\geq 0$. 
Similarly, 
the quotient $q_l$ of $j_l$ 
divided by $t_l$ is nonnegative for $r<l\leq n$. 
Hence, 
$m:=\prod _{l=r+1}^nx_l^{q_lt_l}$ 
belongs to $\Kx _{\lambda }$. 
Therefore, 
$x_1^{j_1}\cdots x_n^{j_n}
=mx_1^{i_1}\cdots x_s^{i_s}$ 
belongs to $\Kx _{\lambda }
x_1^{i_1}\cdots x_s^{i_s}$. 
\end{proof}

We say that $f\in \Rx \sm \zs $ is {\it primitive} 
if no prime element $p$ of $R$ satisfies $f\in p\Rx $. 
We remark that, 
if $R$ is a UFD 
and $f\in \Rx \sm \zs $ is primitive, 
then $Bf\cap \Rx =(B\cap \Rx )f$ holds 
for any $K$-submodule $B$ of $\Kx $. 
In the situation of Theorem~\ref{thm:non-degen}, 
we may assume that $f_1,\ldots ,f_n$ 
are primitive elements of $\Rx $. 
Write $\gamma \in \Gamma _{\G }$ 
as in (\ref{eq:unique expression}), 
and assume that 
$i_1,\ldots ,i_r\geq 0$. 
Then, 
since $f_1^{i_1}\cdots f_s^{i_s}$ is primitive, 
we know by Lemma~\ref{lem:standard grading} 
that 
\begin{equation}\label{eq:Gauss}
\begin{aligned}
&V_{\gamma }=\psi (\Kx _{\gamma })\cap \Rx 
=\psi (\Kx _{\lambda }x_1^{i_1}\cdots x_s^{i_s})\cap \Rx \\
&\quad =\psi (\Kx _{\lambda })f_1^{i_1}\cdots f_s^{i_s}\cap \Rx 
=(\psi (\Kx _{\lambda })\cap \Rx )f_1^{i_1}\cdots f_s^{i_s}\\
&\quad 
=V_{\lambda }f_1^{i_1}\cdots f_s^{i_s}. 
\end{aligned}
\end{equation}

Now, 
take any $R$-algebra $S$, 
and let $\bar{f}_1,\ldots ,\bar{f}_n$ 
be the images of $f_1,\ldots ,f_n$ in $\Sx $. 
We consider the algebraic $\G $-action 
$V_S=(S\otimes _RV_{\gamma })_{\gamma \in \Gamma _{\G }}$ 
on $\Sx $. 
From (\ref{eq:Gauss}), 
it follows that 
$S\otimes _RV_{\gamma }=
(S\otimes _RV_{\lambda })
\bar{f}_1^{i_1}\cdots \bar{f}_s^{i_s}$ 
for each $\gamma \in \Gamma _{\G }$ 
with $i_1,\ldots ,i_r\geq 0$.

In the notation above, 
we have the following proposition.

\begin{prop}\label{prop:specialization}
Assume that $V_S$ is diagonalizable, 
and $S\otimes _RV_{\gamma }\neq \zs $ holds 
for each $\gamma \in \Gamma _{\G }$ with 
$V_{\gamma }\neq \zs $. 
Then, 
$\bar{f}_1,\ldots ,\bar{f}_s$ 
form a partial system of coordinates of $\Sx $. 
\end{prop}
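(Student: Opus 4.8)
The plan is to combine the diagonalizability of $V_S$ with the monoid-grading structure recorded in (\ref{eq:Gauss}) and Lemma~\ref{lem:standard grading}. Since $V_S$ is diagonalizable, I would first fix $\Psi =(h_1,\ldots ,h_n)\in \Aut _S\Sx $ with each $h_j$ a $V_S$-homogeneous element, say of degree $\nu _j\in \Gamma _{\G }$; then $\Sx =S[h_1,\ldots ,h_n]$ carries the standard $\Gamma _{\G }$-grading attached to the tuple $(\nu _1,\ldots ,\nu _n)$, so the set of degrees of nonzero $V_S$-homogeneous elements of $\Sx $ is the submonoid $\langle \nu _1,\ldots ,\nu _n\rangle $ of $\Gamma _{\G }$. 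On the other hand, the hypothesis that $S\otimes _RV_{\gamma }\ne \zs $ precisely when $V_{\gamma }\ne \zs $ shows that this same set of degrees equals $P:=\langle \gamma _1,\ldots ,\gamma _n\rangle $, since $V_{\gamma }\ne \zs $ is equivalent to $\Kx _{\gamma }\ne \zs $, which in turn holds exactly for $\gamma \in P$. Hence $\langle \nu _1,\ldots ,\nu _n\rangle =P=\langle \gamma _1,\ldots ,\gamma _n\rangle $ as submonoids of $\Gamma _{\G }$.

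The crux of the argument is to show that, after reindexing $h_1,\ldots ,h_n$, one has $\nu _i=\gamma _i$ for $1\le i\le s$. For $i\le s$ the expression (\ref{eq:unique expression}) of $\gamma _i$ is simply $\gamma _i=1\cdot \gamma _i$ with $\lambda =0$, so (\ref{eq:Gauss}) says that the $V_S$-degree-$\gamma _i$ component of $\Sx $ is free of rank one over the degree-$0$ subring $S\otimes _RV_0$, with $\bar{f}_i$ as a generator. Writing this component in the coordinates $h_1,\ldots ,h_n$, one extracts a monomial in the $h_j$ of degree $\gamma _i$ that divides every monomial of that degree; and combining the rank-one freeness, the equality $\langle \nu _1,\ldots ,\nu _n\rangle =\langle \gamma _1,\ldots ,\gamma _n\rangle $ from the previous paragraph, and the structural facts $\Gamma _{\G }=\Lambda \oplus \bigoplus _{i=1}^r\Z \gamma _i$, $t_i\gamma _i\in \bigcap _{j=r+1}^n\Lambda _j$, and the minimality of each $t_i$ furnished by Lemma~\ref{lem:standard grading}, this monomial is forced to be a single $h_{j_0}$, whence $\nu _{j_0}=\gamma _i$. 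I expect this to be the hardest step: because the degree-$0$ part of the grading may be strictly larger than $S$ and $\Gamma _{\G }$ may carry torsion, the grading need not be ``connected'', so the familiar rigidity of minimal homogeneous systems of generators has to be re-derived by hand in this setting.

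Granting that $\nu _i=\gamma _i$ for $i\le s$, the conclusion is quick. Since $h_i$ is $V_S$-homogeneous of degree $\gamma _i$, the identification $S\otimes _RV_{\gamma _i}=(S\otimes _RV_0)\bar{f}_i$ from (\ref{eq:Gauss}) yields $h_i=w_i\bar{f}_i$ with $w_i\in S\otimes _RV_0$ and $w_i\ne 0$. Now $h_i=\Psi (x_i)$ is a coordinate of $\Sx $, hence — at least when $\Sx $ is a domain, which is the situation of interest — an irreducible element, so one of $w_i,\bar{f}_i$ is a unit of $\Sx $; but $\gamma _i\ne 0$ for $i\le s$ (otherwise $t_i=1$, contradicting $i\le s$) and $(S\otimes _RV_{\gamma _i})\cap S=\zs $, so $\bar{f}_i$ is not a unit and therefore $w_i$ is. Thus $\bar{f}_i=w_i^{-1}h_i$ for $i\le s$, and $\bar{f}_1,\ldots ,\bar{f}_s,h_{s+1},\ldots ,h_n$ is obtained from the coordinate system $h_1,\ldots ,h_n$ by rescaling the first $s$ entries by units of $S$; it is therefore again a coordinate system of $\Sx $, which is exactly the assertion that $\bar{f}_1,\ldots ,\bar{f}_s$ form a partial system of coordinates of $\Sx $.
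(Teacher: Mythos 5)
The decisive step of your argument is missing. Everything is reduced to the claim that, after reindexing, some homogeneous coordinate $h_{j_0}$ has $V_S$-degree exactly $\gamma _i$ for each $i\leq s$, but this is not proved: you only assert it is ``forced'' by combining the monoid equality $\langle \nu _1,\ldots ,\nu _n\rangle =\langle \gamma _1,\ldots ,\gamma _n\rangle $ with Lemma~\ref{lem:standard grading}, and you yourself flag it as the hardest step, to be ``re-derived by hand.'' The monoid equality alone cannot do it: since $\Gamma _{\G }$ may have torsion, $\gamma _i$ can be decomposable in that monoid (for instance, if $\Gamma _{\G }=\Z /5\Z $ and $\gamma _i=\bar{1}$, then $\gamma _i=\bar{2}+\bar{3}$ with $\bar{2},\bar{3}$ in the monoid), so nothing you have established prevents all the $\nu _j$ from differing from $\gamma _i$ while their monoid still contains it. Likewise the intermediate assertion that there is ``a monomial in the $h_j$ of degree $\gamma _i$ dividing every monomial of that degree'' is unsubstantiated. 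This is exactly the point where the hypothesis ``$S\otimes _RV_{\gamma }\neq \zs $ whenever $V_{\gamma }\neq \zs $'' has to do real work, whereas your sketch uses it only to compare monoids. The paper closes this gap in two moves: first, if every homogeneous coordinate $y_j$, written via (\ref{eq:Gauss}) as $\bar{f}_1^{i_1}\cdots \bar{f}_s^{i_s}g$ with $g$ of degree in $\bigcap _{i=r+1}^n\Lambda _i$, had $i_l=0$, then $\Sx =S[y_1,\ldots ,y_n]$ would lie in $\bigoplus _{\gamma \in \Gamma _{\G }^{(l)}}S\otimes _RV_{\gamma }$, forcing $S\otimes _RV_{\gamma _l}=\zs $ (as $\gamma _l\notin \Gamma _{\G }^{(l)}$ for $l\leq s$) and contradicting the non-degeneracy hypothesis; second, reducing modulo an arbitrary prime ideal of $S$ and using that the image of that coordinate is irreducible forces $i_l=1$, $i_t=0$ for $t\neq l$, and the cofactor $g$ to be a unit. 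Until you supply an argument of this kind, what you have is a plan rather than a proof.

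A secondary point: you restrict to the case where $\Sx $ is a domain, but the proposition is stated for an arbitrary $R$-algebra $S$; the paper's prime-by-prime reduction, together with the observation that the constant term of $g$ is a unit of $S$ while $g$ minus its constant term is nilpotent, is precisely what handles non-reduced $S$, where your appeal to irreducibility of $h_i$ in $\Sx $ is unavailable. For the application in Theorem~\ref{thm:non-degen} one only needs $S=\kappa (\p )$, so the domain case would suffice there, and your final step (deducing $\bar{f}_i=w_i^{-1}h_i$ once $\nu _i=\gamma _i$ is known) is correct in that case; but as a proof of the proposition as stated this is an additional gap.
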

\begin{proof}
By assumption, 
there exists $\sigma \in \Aut _S\Sx $ 
such that $y_i:=\sigma (x_i)$ 
is $V_S$-homogeneous for $i=1,\ldots ,n$. 
We show that, 
for each $1\leq l\leq s$, 
there exist $1\leq \sigma (l)\leq n$ 
and $\alpha _l\in \Sx ^*$ 
satisfying $\bar{f}_l=\alpha _ly_{\sigma (l)}$. 
First, 
we claim that there exists 
$1\leq \sigma (l)\leq n$ for which 
$y_{\sigma (l)}$ is written as 
$\bar{f}_1^{i_1}\cdots \bar{f}_s^{i_s}g$ 
with $i_l\geq 1$, 
where $i_u\in T_u$ for each $1\leq u\leq s$, 
and $g\in S\otimes _RV_{\lambda }$ for some 
$\lambda \in \bigcap _{i=r+1}^n\Lambda _i$. 
In fact, 
if not, 
$\Sx =S[y_1,\ldots ,y_n]$ 
is contained in 
$\bigoplus _{\gamma \in \Gamma _{\G }^{(l)}}
S\otimes _RV_{\gamma }$. 
Since $\gamma _l$ does not belong to 
$\Gamma _{\G }^{(l)}$ if $1\leq l\leq s$, 
we have $S\otimes _RV_{\gamma _l}=\zs $. 
This implies that 
$V_{\gamma _l}=\zs $ by assumption, 
contradicting $f_{\gamma _l}\in V_{\gamma _l}$. 
It remains only to check that 
$i_l=1$, $i_t=0$ for each $t\neq l$, 
and $g$ is a unit of $\Sx $. 
Take any prime ideal $\p $ of $S$, 
and let $\pi :\Sx \to (S/\p )[\x ]$ 
be the natural surjection. 
Then, 
$\pi (y_{\sigma (l)})
=\pi (\bar{f}_1)^{i_1}\cdots 
\pi (\bar{f}_s)^{i_s}
\pi (g)$ 
is a coordinate of $(S/\p )[\x ]$, 
and hence is an irreducible element of $(S/\p )[\x ]$. 
Now, 
consider the algebraic $\G $-action 
$V_{S/\p }$ on $(S/\p )[\x ]$. 
Then, 
$\pi (\bar{f}_i)$ belongs to 
$(S/\p )\otimes _RV_{\gamma _i}$ 
for each $i$. 
If $1\leq i\leq s$, 
then 
$\bigl((S/\p )\otimes _RV_{\gamma _i}\bigr) 
\cap (S/\p )$ equals $\zs $, 
since $\gamma _i\neq 0$. 
Hence, 
we have either $\pi (\bar{f}_i)=0$ 
or $\pi (\bar{f}_i)\not\in S/\p $. 
By the irreducibility of $\pi (y_{\sigma (l)})$, 
it follows that 
$i_l=1$, $i_t=0$ for each $t\neq l$, 
and $\pi (g)$ belongs to $(S/\p )[\x ]^*=(S/\p )^*$. 
Since $\p $ is any prime ideal of $S$, 
we know that 
the constant term $c$ of $g$ 
is a unit of $S$, 
and $g-c$ is a nilpotent element of $\Sx $. 
Therefore, 
$g$ is a unit of $\Sx $. 
\end{proof}

Let us complete the proof of Theorem~\ref{thm:non-degen}. 
We may assume that 
$\Gamma _{\G }^{(i)}\neq \Gamma _{\G }$ 
for each $i\neq n$. 
Then, 
we have $t_i\neq 1$ for each $i\neq n$. 
Thanks to Lemma~\ref{lem:partial system}, 
it suffices to show that 
$f_1,\ldots ,f_{n-1}$ 
form a partial system of coordinates of $\Rx $. 
Take any prime ideal $\p $ of $R$. 
By assumption, 
$G$ does not degenerate at maximal ideals 
of $R$ containing $\p $. 
Hence, 
$G$ does not degenerate at $\p $. 
Moreover, 
$G_{\p }$ is diagonalizable 
by the assumption of Problem~\ref{prob:main2}. 
Hence, 
by Proposition~\ref{prop:specialization}, 
the images of $f_1,\ldots ,f_{n-1}$ 
in $\kappa (\p )[\x ]$ 
form a partial system of coordinates of $\kappa (\p )[\x ]$. 
This implies that 
$f_1,\ldots ,f_{n-1}$ 
form a partial system of coordinates of $\Rx $ 
thanks to the result on ``residual variables" 
by Bhatwadekar-Dutta~\cite[Remark 3.4]{BD}, 
since $R$ is a noetherian UFD by assumption, 
and UFD is seminormal.

\noindent
Department of Mathematics and Information Sciences\\ 
Tokyo Metropolitan University \\ 
1-1  Minami-Osawa, Hachioji, 
Tokyo 192-0397, Japan\\
kuroda@tmu.ac.jp

\end{document}